\numberwithin{equation}{section}
\numberwithin{figure}{section}
\theoremstyle{plain}
\newtheorem{thm}{\protect\theoremname}[section]
  \theoremstyle{plain}
  \newtheorem{conjecture}[thm]{\protect\conjecturename}
  \theoremstyle{plain}
  \newtheorem{cor}[thm]{\protect\corollaryname}
  \theoremstyle{plain}
  \newtheorem{lem}[thm]{\protect\lemmaname}
  \theoremstyle{remark}
  \newtheorem{rem}[thm]{\protect\remarkname}
  \theoremstyle{definition}
  \newtheorem{defn}[thm]{\protect\definitionname}
  \theoremstyle{definition}
  \newtheorem{example}[thm]{\protect\examplename}
  \providecommand{\conjecturename}{Conjecture}
  \providecommand{\corollaryname}{Corollary}
  \providecommand{\definitionname}{Definition}
  \providecommand{\examplename}{Example}
  \providecommand{\lemmaname}{Lemma}
  \providecommand{\remarkname}{Remark}
\providecommand{\theoremname}{Theorem}
\begin{document}
\begin{comment}
Group theory
\end{comment}

\global\long\def\normalin{\mathrel{\lhd}}

\global\long\def\innormal{\mathrel{\rhd}}

\global\long\def\semidirect{\mathbin{\rtimes}}

\global\long\def\Stab{\operatorname{Stab}}

\begin{comment}
Topology
\end{comment}

\global\long\def\bdry{\partial}

\global\long\def\susp{\operatorname{susp}}

\begin{comment}
Poset combinatorics
\end{comment}

\global\long\def\lrprod{\mathop{\check{\prod}}}

\global\long\def\lrtimes{\mathbin{\check{\times}}}

\global\long\def\urtimes{\mathbin{\hat{\times}}}

\global\long\def\urprod{\mathop{\hat{\prod}}}

\global\long\def\subsetdot{\mathrel{\subset\!\!\!\!{\cdot}\,}}

\global\long\def\dotsupset{\mathrel{\supset\!\!\!\!\!\cdot\,\,}}

\global\long\def\precdot{\mathrel{\prec\!\!\!\cdot\,}}

\global\long\def\dotsucc{\mathrel{\cdot\!\!\!\succ}}

\global\long\def\des{\operatorname{des}}

\global\long\def\rank{\operatorname{rank}}

\global\long\def\height{\operatorname{height}}

\begin{comment}
Lattice
\end{comment}

\global\long\def\modreln{\mathrel{M}}

\begin{comment}
Simplicial complex combinatorics
\end{comment}

\global\long\def\link{\operatorname{link}}

\global\long\def\freejoin{\mathbin{\circledast}}

\global\long\def\stellarsd{\operatorname{stellar}}

\global\long\def\conv{\operatorname{conv}}

\global\long\def\disjointunion{\mathbin{\dot{\cup}}}

\global\long\def\skel{\operatorname{skel}}

\global\long\def\depth{\operatorname{depth}}

\global\long\def\st{\operatorname{star}}

\global\long\def\alexdual#1{#1^{\vee}}

\global\long\def\reg{\operatorname{reg}}

\global\long\def\shift{\operatorname{Shift}}

\begin{comment}
Graph complexes
\end{comment}

\global\long\def\Dom{\operatorname{Dom}}

\begin{comment}
My stuff
\end{comment}

\global\long\def\cosetposet{\overline{\mathfrak{C}}}

\global\long\def\cosetlat{\mathfrak{C}}

\global\long\def\length{\operatorname{length}}

\global\long\def\lexlt{<_{\mathrm{lex}}}

\global\long\def\lexleq{\leq_{\mathrm{lex}}}

\subjclass[2010]{Primary 05E45; Secondary 20D30, 06A07. }

\title{Chains of modular elements and shellability}

\author{Russ Woodroofe}

\address{Department of Mathematics, Washington University in St.~Louis, St.~Louis,
MO, 63130}

\email{russw@math.wustl.edu}
\begin{abstract}
Let $L$ be a lattice admitting a left-modular chain of length $r$,
not necessarily maximal. We show that if either $L$ is graded or
the chain is modular, then the $(r-2)$-skeleton of $L$ is vertex-decomposable
(hence shellable). This proves a conjecture of Hersh. Under certain
circumstances, we can find shellings of higher skeleta. For instance,
if the left-modular chain consists of every other element of some
maximum length chain, then $L$ itself is shellable. We apply these
results to give a new characterization of finite solvable groups in
terms of the topology of subgroup lattices.

Our main tool relaxes the conditions for an $EL$-labeling, allowing
multiple ascending chains as long as they are lexicographically before
non-ascending chains. We extend results from the theory of $EL$-shellable
posets to such labelings. The shellability of certain skeleta is one
such result. Another is that a poset with such a labeling is homotopy
equivalent (by discrete Morse theory) to a cell complex with cells
in correspondence to weakly descending chains. 
\end{abstract}
\maketitle

\section{\label{sec:Introduction}Introduction}

We consider the order complex of a lattice admitting a chain $\mathbf{m}$
consisting of modular elements. The case where $\mathbf{m}$ is a
maximal chain has been studied systematically since \cite{Stanley:1972}:
such lattices are supersolvable. Supersolvable lattices were one motivation
for Björner's original definition of $EL$-labeling \cite{Bjorner:1980},
and in particular their order complexes are shellable and hence highly
connected.

Lattices that admit a non-maximal chain consisting of modular elements
are less well-understood. Hersh and Shareshian \cite{Hersh/Shareshian:2006}
used the Homotopy Complementation Formula to show that if $L$ has
a chain of length $r$ consisting of modular elements, then $L$ is
$(r-3)$-connected. The purpose of this paper is to extend Björner's
shellability results to situations of this type.

One motivation is to prove the following conjecture of Hersh, which
gives a new proof of the Hersh-Shareshian connectivity result:
\begin{conjecture}
\emph{\label{con:HershConj}(Hersh {[}personal communication{]})}
If $L$ is a finite lattice admitting a chain of length $r$ that
consists of modular elements, then the $(r-2)$-skeleton of $L$ is
pure and shellable.
\end{conjecture}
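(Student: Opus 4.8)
The plan is to build an edge labeling of the Hasse diagram of $L$ directly from the modular chain, verify that it satisfies the relaxed $EL$-type condition announced in the abstract, and then invoke the general shelling-of-skeleta result proved for such labelings. Write the modular chain as $\hat{0} = m_{0} < m_{1} < \cdots < m_{r} = \hat{1}$ and, for each covering relation $x \lessdot y$, set
\[
\lambda(x \lessdot y) = \min\{\, i : y \le x \vee m_{i} \,\},
\]
the first level of the chain whose join with $x$ already dominates $y$. This is the natural generalization of Stanley's supersolvable labeling to a \emph{non-maximal} modular chain; since $m_{r} = \hat{1}$ the label is always defined and lies in $\{1, \dots, r\}$. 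The point of departure from the classical case is precisely that the label alphabet is smaller than the length of a maximal chain, so labels must be allowed to repeat along an ascending chain.

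First I would record the consequences of modularity of the $m_{i}$. Fixing an interval $[x,z]$, the elements $c_{i} = (x \vee m_{i}) \wedge z$ form a weakly increasing sequence $x = c_{0} \le c_{1} \le \cdots \le c_{r} = z$, and the modular law should let one show that the covers inside a nontrivial segment $[c_{i-1}, c_{i}]$ all carry label $i$, so that any refinement of this ``canonical'' sequence is a weakly increasing maximal chain of $[x,z]$. Both left- and right-modularity of $m_{i-1}, m_{i}$ enter here, in computing meets and joins of the form $(u \vee m_{i-1}) \wedge z$ for $c_{i-1} \le u \le c_{i}$.

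The heart of the argument is the verification of the relaxed labeling condition: within every interval $[x,z]$ the ascending (weakly increasing) maximal chains all carry one common label word and precede every non-ascending maximal chain lexicographically. Ascending chains will be exactly the segmentwise refinements of the canonical sequence, so they all carry the word consisting of each level $i$ with $c_{i-1} < c_{i}$ repeated according to the length of $[c_{i-1}, c_{i}]$; uniqueness of this word is then immediate. Lexicographic minimality I would obtain by showing that a maximal chain containing a strict descent can be locally modified, using the modular law, into a lexicographically smaller one. I expect this comparison to be the main obstacle: unlike the classical $EL$ setting one must tolerate plateaus in the label word and argue that a genuine descent can always be pushed strictly earlier, without disturbing the surrounding repeated labels.

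Finally I would settle purity and conclude. In the full interval $[\hat{0}, \hat{1}]$ one has $c_{i} = m_{i}$, and since the modular chain is strict every level $i \in \{1,\dots,r\}$ is crossed; a short computation with the modular law (showing $\lambda = i$ at the first cover where $x_{j} \vee m_{i-1} \ge m_{i}$) gives that every maximal chain of $L$ has at least $r$ covers. Hence every maximal chain of the proper part $\overline{L}$ has at least $r-1$ elements, and the modular chain itself supplies a chain $m_{1} < \cdots < m_{r-1}$ of exactly $r-1$ elements; truncating the order complex of $\overline{L}$ at dimension $r-2$ therefore leaves no maximal face of smaller dimension, which is the purity statement. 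With the relaxed labeling established, the extension of $EL$-shellability to such labelings (the tool described in the abstract) produces a shelling of the $(r-2)$-skeleton, which also recovers the Hersh--Shareshian $(r-3)$-connectivity bound since a shellable complex of dimension $r-2$ is $(r-3)$-connected.
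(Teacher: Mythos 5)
Your proposal follows the same overall architecture as the paper's: your labeling $\lambda(x\lessdot y)=\min\{i\,:\,y\le x\vee m_{i}\}$ is the paper's left-modular labeling (for left-modular $m_{i}$ one has $y\le x\vee m_{i}$ if and only if $x\vee(m_{i}\wedge y)=y$), your plan for verifying the relaxed labeling condition is exactly the quasi-$EL$ lemma of Section \ref{sec:QuasiLexLabelings} (cited there from \cite{Woodroofe:2008}, so the ``main obstacle'' you anticipate is a known argument), and the black-box tool you invoke at the end is Theorem \ref{thm:QuasiELShellsSkeleton}. Where you genuinely depart from the paper is the key counting step. The paper (Theorem \ref{thm:ModRskel}) shows that every maximal chain receives all $r$ labels by invoking Lemma \ref{lem:ModgenDistributive} --- the sublattice generated by $\mathbf{m}$ and the chain is distributive, hence graded --- and then running the projection count of Theorem \ref{thm:GradedQLeftModularIsVD} inside that sublattice. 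You instead argue locally along the chain: for each $i$, the first cover $x_{j-1}\lessdot x_{j}$ with $m_{i}\le x_{j}\vee m_{i-1}$ carries label exactly $i$. This claim is correct, and the verification is short: a label less than $i$ would give $x_{j}\le x_{j-1}\vee m_{i-1}$, hence $m_{i}\le x_{j}\vee m_{i-1}\le x_{j-1}\vee m_{i-1}$, against minimality of $j$; and if $x_{j}\not\le x_{j-1}\vee m_{i}$, then $x_{j}\wedge(x_{j-1}\vee m_{i})=x_{j-1}$ by the cover relation, so the modular pair $(x_{j},m_{i-1})$ applied with $z=x_{j-1}\vee m_{i}\ge m_{i-1}$ gives $m_{i}\le(m_{i-1}\vee x_{j})\wedge z=m_{i-1}\vee(x_{j}\wedge z)=m_{i-1}\vee x_{j-1}$, again contradicting minimality of $j$. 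Note that this uses the pair $(y,m_{i-1})$, i.e.\ two-sided modularity, which is precisely where ``modular'' rather than ``left-modular'' must enter --- consistent with the paper, which in the merely left-modular case has to assume gradedness. Your route is thus more elementary and self-contained than the paper's distributivity detour; what the paper's route buys in exchange is the exact multiplicity count (label $i$ occurs exactly $\length[m_{i-1},m_{i}]$ times), which it needs anyway for the graded/left-modular theorem and for tracking repeated labels.

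One point you should make explicit in a full write-up: Theorem \ref{thm:QuasiELShellsSkeleton} is driven by label statistics (every maximal chain must have at least $r$ distinct-plus-repeated labels), and purity alone cannot substitute for that hypothesis --- a long chain whose covers all carry the same label contributes only $\ell_{0}+\ell_{1}=2$. So what must be fed into the tool is your stronger observation that every level is crossed, i.e.\ each of the $r$ labels occurs on every maximal chain, not the derived statement that every maximal chain has at least $r$ covers, which is needed only for purity. Since your local argument proves the stronger fact, this is a matter of wording rather than a gap.
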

We prove the stronger result that the $(r-2)$-skeleton of $L$ is
vertex-decomposable. Moreover, if the $(r-1)$-skeleton of $L$ is
pure, then this is also vertex-decomposable, hence shellable. We also
show that we can weaken from modularity to left-modularity, provided
that the lattice is graded. More precise results are in Section \ref{sec:Shellings},
specifically Theorems \ref{thm:QuasiELShellsSkeleton}, \ref{thm:GradedQLeftModularIsVD},
and \ref{thm:ModRskel}.\medskip{}

Another motivation for study of lattices with a chain of modular elements
comes from an well-studied class of examples, that of a chain of normal
subgroups in a subgroup lattice $L(G)$. Combining Theorem \ref{thm:QuasiELShellsSkeleton}
with the results of \cite{Shareshian/Woodroofe:2012} gives:
\begin{thm}
\label{thm:Intro-SolvableShellSkel} If $G$ is a finite group with
a chief series of length $r$, then $G$ is non-solvable if and only
if the $(r-1)$-skeleton of $L(G)$ is shellable and pure of dimension
$(r-1)$. 
\end{thm}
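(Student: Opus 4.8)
The plan is to read a chief series $1 = N_{0} \normalin N_{1} \normalin \cdots \normalin N_{r} = G$ as a chain of \emph{modular} elements of $L(G)$ of length $r$: each $N_{i}$ is normal in $G$, and a normal subgroup is a modular element of the subgroup lattice (this is exactly Dedekind's modular law $A(N\cap B)=AN\cap B$ for $A\leq B$). Thus the chain is modular, so Theorem \ref{thm:QuasiELShellsSkeleton} applies to $L(G)$ with no gradedness hypothesis. It gives unconditionally that the $(r-2)$-skeleton is vertex-decomposable, and, crucially for us, that \emph{whenever the $(r-1)$-skeleton is pure it is vertex-decomposable, hence shellable}. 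Therefore the shellability half of the statement comes for free once purity is established, and the whole theorem reduces to deciding, in terms of solvability, when the $(r-1)$-skeleton is pure of dimension $r-1$.

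First I would translate purity into a chain-length condition. A face of the order complex of the proper part of $L(G)$ is a chain of subgroups strictly between $1$ and $G$, a $d$-face having $d+1$ members, so the $(r-1)$-skeleton collects the chains with at most $r$ proper members. If every maximal chain of $L(G)$ has length at least $r+1$, then every chain with fewer than $r$ proper members is non-maximal in $L(G)$, hence extends without exceeding $r$ proper members; consequently every facet of the skeleton has exactly $r$ members and the skeleton is pure of dimension $r-1$. Conversely, a maximal chain of $L(G)$ of length $\leq r$ already terminates with at most $r-1$ proper members and so yields a facet of the skeleton of dimension $\leq r-2$. Hence the $(r-1)$-skeleton is pure of dimension $r-1$ if and only if every maximal chain of $L(G)$ has length at least $r+1$.

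It remains to settle this chain-length condition group-theoretically, and here the results of \cite{Shareshian/Woodroofe:2012} enter. The input I would invoke is that, for a finite group with chief length $r$, every maximal chain of $L(G)$ has length at least $r$, and a maximal chain of length exactly $r$ exists if and only if $G$ is solvable. Granting this, non-solvability is equivalent to the absence of any maximal chain of length $r$, which together with the lower bound $\geq r$ forces every maximal chain to have length $\geq r+1$; by the previous paragraph this is precisely purity of the $(r-1)$-skeleton, and then Theorem \ref{thm:QuasiELShellsSkeleton} upgrades purity to shellability, giving the right-hand side. Conversely, if $G$ is solvable then \cite{Shareshian/Woodroofe:2012} furnishes a maximal chain of $L(G)$ of length exactly $r$, whose $r-1$ intermediate subgroups form a facet of the $(r-1)$-skeleton of dimension $r-2\ne r-1$; thus the skeleton is not pure of dimension $r-1$ and the right-hand side fails. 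This establishes the equivalence.

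The topological steps above are routine once Theorem \ref{thm:QuasiELShellsSkeleton} is in hand; the genuine obstacle is the group-theoretic length dichotomy cited from \cite{Shareshian/Woodroofe:2012}, namely that solvability is detected exactly by the presence of a maximal chain whose length equals the chief length. The delicate case is solvable but not supersolvable $G$, where a chief factor of rank at least two produces maximal chains far longer than $r$ (of length $\Omega(\lvert G\rvert)>r$); there purity is broken not by the long chains but by the simultaneous existence of a short maximal chain of length exactly $r$ --- for instance $1 < C_{3} < A_{4}$ in $L(A_{4})$, which collapses one step the rank-two chief factor $V_{4}$ --- and it is precisely the existence of such a chain for every solvable group that the cited work supplies.
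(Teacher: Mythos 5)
Your overall strategy is the same as the paper's: read the chief series as a modular chain of length $r$ in $L(G)$, translate ``the $(r-1)$-skeleton is pure of dimension $r-1$'' into ``every maximal chain of $L(G)$ has length at least $r+1$,'' and then settle that chain-length condition by the solvability dichotomy. Those reductions are correct, and the group-theoretic input is the right one (the paper credits the solvable direction --- existence of a maximal chain of length exactly $r$ --- to Kohler \cite{Kohler:1968}, and uses \cite{Shareshian/Woodroofe:2012} only for the non-solvable bound; your lumping both into \cite{Shareshian/Woodroofe:2012} is an attribution detail, not a mathematical problem).

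The gap is the step ``Theorem \ref{thm:QuasiELShellsSkeleton} upgrades purity to shellability.'' That theorem says nothing about purity: its hypothesis is the label statistic $r'=\min_{\mathbf{c}}\left(\ell_{0}(\mathbf{c})+\ell_{1}(\mathbf{c})\right)$, and it yields vertex-decomposability only of the $(r'-2)$-skeleton. Purity, i.e.\ all maximal chains having length at least $r+1$, does not by itself force $r'\geq r+1$ for a general quasi-$CL$-labeling: a chain of length $10$ could carry only two distinct labels, each repeated, giving $\ell_{0}+\ell_{1}=4$. What rescues the argument for the modular labeling is precisely Lemma \ref{lem:ModgenDistributive} (equivalently, Theorem \ref{thm:ModRskel}): since the sublattice generated by a maximal chain $\mathbf{c}$ and the chief series is distributive, every maximal chain receives exactly $r$ distinct labels, so $\ell_{0}(\mathbf{c})=r$ for all $\mathbf{c}$; then length $\geq r+1$ forces, by pigeonhole, at least one repeated label, whence $\ell_{0}(\mathbf{c})+\ell_{1}(\mathbf{c})\geq r+1$ for every $\mathbf{c}$, and only now does Theorem \ref{thm:QuasiELShellsSkeleton} apply to the $(r-1)$-skeleton (purity of that skeleton also falls out of the same count). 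This label-counting step is the entire content of the paper's own proof of the non-solvable direction, and your write-up never supplies it. The same conflation appears earlier, where you attribute the unconditional vertex-decomposability of the $(r-2)$-skeleton to Theorem \ref{thm:QuasiELShellsSkeleton} alone; that statement is Theorem \ref{thm:ModRskel}, which exists exactly to verify the label hypothesis that Theorem \ref{thm:QuasiELShellsSkeleton} needs.
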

An immediate consequence of Conjecture \ref{con:HershConj} is that
the $(r-2)$-skeleton of such an $L$ is Cohen-Macaulay, i.e. that
the depth of the simplicial complex is at least $r-2$. Depth is a
topological invariant, giving a new characterization of solvability
with respect to the topology of $L(G)$ and the length of a chief
series:
\begin{cor}
\label{cor:Intro-SolvableDepth} If $G$ is a finite group with a
chief series of length $r$, then $G$ is solvable if and only if
$\depth\vert L(G)\vert\leq r-2$.
\end{cor}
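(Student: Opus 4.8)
The plan is to prove the equivalent statement that $G$ is non-solvable if and only if $\depth\vert L(G)\vert\geq r-1$, and then to negate. The engine connecting the two sides is the standard identity $\depth\Delta=\max\{k:\skel_k\Delta\text{ is Cohen-Macaulay}\}$ (a theorem of Munkres, which is also what makes $\depth$ a topological invariant; see also Smith). This gives the clean reformulation $\depth\vert L(G)\vert\geq r-1$ if and only if $\skel_{r-1}\vert L(G)\vert$ is Cohen-Macaulay, so the corollary reduces to matching non-solvability of $G$ with Cohen-Macaulayness of the $(r-1)$-skeleton. I would extract this from Theorem~\ref{thm:Intro-SolvableShellSkel} together with the skeleton shellability results of Section~\ref{sec:Shellings}.

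One implication is immediate. If $G$ is non-solvable, then by Theorem~\ref{thm:Intro-SolvableShellSkel} the complex $\skel_{r-1}\vert L(G)\vert$ is shellable and pure of dimension $r-1$; since a pure shellable complex is Cohen-Macaulay, the depth identity yields $\depth\vert L(G)\vert\geq r-1$.

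For the converse, suppose $\depth\vert L(G)\vert\geq r-1$, so that $\skel_{r-1}\vert L(G)\vert$ is Cohen-Macaulay. The main obstacle is that Cohen-Macaulayness is a priori weaker than the shellable-and-pure hypothesis of Theorem~\ref{thm:Intro-SolvableShellSkel}, so I cannot invoke that theorem directly. Two observations bridge the gap. First, a Cohen-Macaulay complex is pure of its own dimension, and $\depth\geq r-1$ forces $\dim\vert L(G)\vert\geq r-1$ (depth never exceeds dimension), so the truncation is genuine and $\skel_{r-1}\vert L(G)\vert$ is pure of dimension exactly $r-1$; this automatically excludes the degenerate case in which the chief series is already a maximal chain and the order complex has dimension only $r-2$. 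Second, a chief series is a chain of modular elements of $L(G)$ of length $r$, so the strengthening of Conjecture~\ref{con:HershConj} proved in Section~\ref{sec:Shellings} applies: once the $(r-1)$-skeleton is pure it is vertex-decomposable, hence shellable (Theorem~\ref{thm:ModRskel}). Thus $\skel_{r-1}\vert L(G)\vert$ is shellable and pure of dimension $r-1$, and Theorem~\ref{thm:Intro-SolvableShellSkel} forces $G$ to be non-solvable.

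Combining the two directions gives that $G$ is non-solvable precisely when $\depth\vert L(G)\vert\geq r-1$; negating, and using that $\depth$ is integer-valued, yields $G$ solvable if and only if $\depth\vert L(G)\vert\leq r-2$. Here the lower bound $\depth\vert L(G)\vert\geq r-2$ coming from Conjecture~\ref{con:HershConj} is not logically needed for the stated inequality, but it sharpens the conclusion to an equality in the solvable case. The single genuine difficulty is the passage from Cohen-Macaulay to shellable-and-pure in the converse; this is exactly what vertex-decomposability of pure skeleta supplies, and it is why the corollary rests on the main results of the paper rather than on the depth identity alone.
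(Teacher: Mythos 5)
Your proof is correct, but it takes a genuinely different route from the paper's. The paper proves Theorem \ref{thm:Intro-SolvableShellSkel} and Corollary \ref{cor:Intro-SolvableDepth} simultaneously in one self-contained argument: for solvable $G$ it combines Shareshian's shellability theorem with Kohler's result that the minimum length of a maximal chain in $L(G)$ is $r$, so the minimum facet dimension, and hence the depth, of $\vert L(G)\vert$ is exactly $r-2$; for non-solvable $G$ it invokes the Shareshian--Woodroofe bound that every maximal chain of $L(G)$ has length at least $r+2$, observes via the modular labeling that each maximal chain carries only $r$ distinct labels, and applies pigeonhole plus Theorem \ref{thm:QuasiELShellsSkeleton} to conclude $\depth\vert L(G)\vert\geq r-1$. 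You instead treat Theorem \ref{thm:Intro-SolvableShellSkel} as a black box and derive the corollary from its statement: the forward direction is immediate (pure shellable implies Cohen-Macaulay), and the converse rests on your upgrade from Cohen-Macaulayness of $\skel_{r-1}\vert L(G)\vert$ to purity, and from purity to shellability, so that the theorem forces non-solvability. This is valid and non-circular, since the paper's proof of Theorem \ref{thm:Intro-SolvableShellSkel} nowhere assumes the corollary; what your route buys is a clean demonstration that Corollary \ref{cor:Intro-SolvableDepth} really is a formal consequence of Theorem \ref{thm:Intro-SolvableShellSkel} together with the Section \ref{sec:Shellings} machinery, isolating the Cohen-Macaulay-to-shellable bridge as the only new content; what the paper's joint proof buys is economy, since both halves of your argument ultimately unwind into the same group-theoretic ingredients hidden inside the theorem. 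One citation should be sharpened: the claim that purity of the $(r-1)$-skeleton yields vertex-decomposability is not the statement of Theorem \ref{thm:ModRskel}, which concerns only the $(r-2)$-skeleton. It follows by combining the label count of Theorem \ref{thm:ModRskel} with purity (every maximal chain then has at least $r+1$ cover relations but only $r$ distinct labels, so some label repeats, giving $\ell_{0}+\ell_{1}\geq r+1$) and then applying Theorem \ref{thm:QuasiELShellsSkeleton}; this is the same pigeonhole step the paper itself performs, and it deserves an explicit sentence in your write-up.
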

Corollary \ref{cor:Intro-SolvableDepth} is not the first topological
characterization of solvability, or even the first to involve shellability,
but it seems to have a quite different form from previous characterizations. 

\smallskip{}

The main tool used to show shellability of skeleta of posets will
be a certain relaxation of $EL$-labelings (and more generally of
$CL$-labelings). Our definition allows multiple ascending chains,
which are required to lexicographically precede all non-ascending
chains. In addition to shellability, we extend the theory of Björner
and Wachs \cite{Bjorner/Wachs:1996} to describe the homotopy type
of a lattice with such a labeling. Such labelings may have wider applicability
in proving depth bounds in other classes of lattices. Depth bounds
have interesting combinatorial consequences, including bounds on the
$f$-triangle \cite{Duval:1996}, as well as certain Erd\H{o}s-Ko-Rado
type results \cite{Woodroofe:2011a}. 

\medskip{}
The remainder of the paper is organized as follows. In Section \ref{sec:Background}
we review the necessary background on modularity, poset topology,
and shellability. In Section \ref{sec:QuasiLexLabelings} we extend
the definition of $CL$-labeling to that of a quasi-$CL$-labeling.
In Section \ref{sec:Shellings} we give shellings of skeleta in certain
dimensions of posets with a quasi-$CL$-labeling. We give particular
attention to applications in lattices possessing chains consisting
of (left\nobreakdash-)modular elements. In Section \ref{sec:DMtechniques},
we show how discrete Morse theory applies especially easily to posets
with a quasi-$CL$-labeling. In Section \ref{sec:SubgroupLatApplications}
we apply results of the preceding sections to the subgroup lattice
of a finite group.

All lattices, posets, simplicial complexes, and groups considered
in this paper are finite.\vspace{-0.3cm}

\section*{Acknowledgements}

I thank Patricia Hersh for bringing her conjecture to my attention.
I enjoyed several stimulating conversations with Hugh Thomas on modular
chains. I have benefited greatly from the interest and encouragement
of John Shareshian: his comments on the subgroup lattice aspects were
especially helpful. The anonymous referees gave detailed and helpful
comments, from which the paper has benefitted greatly.

\section{\label{sec:Background}Notation and background}

We assume general familiarity with poset topology and shellings as
found in e.g. \cite{Wachs:2007} and/or \cite{Jonsson:2008}, but
review the specific definitions and tools we will need.

\subsection{Modular and left-modular elements}

A pair $(x,y)$ from a lattice $L$ is a \emph{modular pair} if for
every $z\geq y$ we have that 
\[
(y\vee x)\wedge z=y\vee(x\wedge z).
\]
An element $x$ is \emph{left-modular} if $(x,y)$ is a modular pair
for every $y\in L$, and is \emph{modular} (or \emph{two-sided modular})
if both $(x,y)$ and $(y,x)$ are modular pairs for every $y\in L$.
We notice that left-modularity of $x$ is preserved in the lattice
dual $L^{*}$, but recall that (two-sided) modularity is not preserved.
The elements $\hat{0}$ and $\hat{1}$ of any lattice are easily seen
to be modular. We refer the reader to \cite{Birkhoff:1967} for additional
background on modularity, and to \cite{Liu/Sagan:2000} on left-modularity.

A \emph{(left-)modular chain} will refer to a chain consisting of
(left-)modular elements. A lattice is \emph{supersolvable} if it is
graded and has a left-modular maximal chain.

\subsection{Posets and topology}

Associated with any bounded partially-ordered set (\emph{poset}) $P$
is a simplicial complex $\vert P\vert$ (the \emph{order complex})\emph{
}with faces consisting of the chains of $P\setminus\{\hat{0},\hat{1}\}$.
When we say that $P$ satisfies some geometric property such as `shellable'
or `connected', we mean that $\vert P\vert$ satisfies the given property.

\subsection{Shellings}

A \emph{shelling} of a simplicial complex $\Delta$ is an ordering
$\sigma_{1},\dots,\sigma_{m}$ of the facets (maximal faces) of $\Delta$
such that the intersection of $\sigma_{i}$ with the subcomplex generated
by $\sigma_{1},\dots,\sigma_{i-1}$ is pure $(\dim\sigma_{i}-1)$-dimensional.
A useful equivalent characterization of a shelling order is that if
$i<k$, then there is a $j<k$ so that $\sigma_{i}\cap\sigma_{k}\subseteq\sigma_{j}\cap\sigma_{k}$
and $\vert\sigma_{j}\cap\sigma_{k}\vert=\vert\sigma_{k}\vert-1$.
A complex for which there exists a shelling is called \emph{shellable}. 

Any shellable complex is homotopy equivalent to a bouquet of spheres,
where the spheres correspond to (and have the same dimension as) certain
facets in the shelling. Every link in a shellable complex is also
shellable.

\subsection{Cohen-Macaulay, skeleta and depth}

We recall that a complex is \emph{Cohen-Macaulay over $k$} if $\tilde{H}_{i}(\link_{\Delta}\sigma;k)=0$
for all faces $\sigma$ (including $\sigma=\emptyset$) and $i<\dim(\link_{\Delta}\sigma)$.
Cohen-Macaulay complexes have interesting enumerative \cite{Duval:1996}
and extremal \cite{Woodroofe:2011a} properties, and are also of interest
via a connection to commutative algebra \cite{Stanley:1996} via the
Stanley-Reisner ring. One reason for study of shellable complexes
is that any \emph{pure} (having all facets of the same dimension)
shellable complex is Cohen-Macaulay. More generally, any shellable
complex is ``sequentially Cohen-Macaulay''. Additional background
on these properties can be found in e.g. \cite{Stanley:1996} or \cite{Wachs:2007}.

The \emph{$r$-skeleton} of a simplicial complex $\Delta$, which
we write as $\skel_{r}\Delta$, consists of all faces of dimension
$\leq r$. The \emph{depth} of a simplicial complex $\Delta$ is the
maximal $r\leq\dim\Delta$ such that $\skel_{r}\Delta$ is Cohen-Macaulay.
As with the Cohen-Macaulay property, $\depth\Delta$ is closely connected
to the depth (in the commutative algebra sense) of the associated
Stanley-Reisner ring. Moreover, the depth can be defined as a purely
topological property not depending on the triangulation of the underlying
space.

As stated in the introduction, our goal will be to construct shellings
of various skeleta of order complexes of posets. Let $m$ be the minimum
dimension of a facet of $\Delta$. Since any Cohen-Macaulay complex
is pure, we have that $\depth\Delta\leq m$. On the other hand, if
$\skel_{r}\Delta$ is shellable for some $r\leq m$, then $\depth\Delta\geq r$.

\subsection{Vertex-decomposability and $k$-decomposability}

We will use the following tool to construct most of the shellings
in this paper. A \emph{shedding vertex} of a simplicial complex $\Delta$
is a vertex $v$ such that for any face $\sigma$ of $\Delta$ with
$v\in\sigma$, there is a vertex $w\notin\sigma$ which can be exchanged
for $v$, i.e. such that $(\sigma\setminus v)\cup w$ is a face of
$\Delta$. If $\Delta$ has a shedding vertex $v$ such that both
$\Delta\setminus v$ and $\link_{\Delta}v$ are shellable, then $\Delta$
is also shellable \cite[Lemma 6]{Wachs:1999b}. We recursively define
$\Delta$ to be \emph{vertex-decomposable} if $\Delta$ either is
a simplex or else has a shedding vertex $v$ such that both $\Delta\setminus v$
and $\link_{\Delta}v$ are vertex-decomposable. It follows immediately
that a vertex-decomposable complex is shellable. 

We will make frequent use of the following lemma:
\begin{lem}
\label{lem:JoinOfVDSkelsIsVD}\cite[Lemma 6.12]{Jonsson:2008} If
$\Sigma$ and $\Gamma$ are simplicial complexes such that $\skel_{r}\Sigma$
and $\skel_{s}\Gamma$ are pure and vertex-decomposable, then $\skel_{r+s+1}\Sigma*\Gamma$
is vertex-decomposable.
\end{lem}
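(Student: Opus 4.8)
The plan is to prove $\skel_{r+s+1}(\Sigma*\Gamma)$ is vertex-decomposable by induction on the number of vertices, reducing to the given data about $\skel_r\Sigma$ and $\skel_s\Gamma$. First I would record the basic structural fact about joins: a face of $\Sigma*\Gamma$ is a disjoint union $\sigma\dot\cup\tau$ with $\sigma\in\Sigma$ and $\tau\in\Gamma$, so $\dim(\sigma\dot\cup\tau)=\dim\sigma+\dim\tau+1$. Hence a face of dimension at most $r+s+1$ corresponds to a pair with $\dim\sigma+\dim\tau\leq r+s$, but this does \emph{not} force $\dim\sigma\leq r$ and $\dim\tau\leq s$ separately; this interaction is exactly what makes the statement more than a triviality.

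The key reduction is the identity $\skel_{r+s+1}(\Sigma*\Gamma)=\skel_{r+s+1}\bigl((\skel_r\Sigma)*(\skel_s\Gamma)\bigr)$, which I would verify first. The containment $\supseteq$ is clear. For $\subseteq$, given a face $\sigma\dot\cup\tau$ of dimension $\leq r+s+1$, I would argue that it can be enlarged within $\Sigma*\Gamma$ to a facet, and because $\skel_r\Sigma$ and $\skel_s\Gamma$ are \emph{pure} (of dimensions $r$ and $s$), every vertex of $\Sigma$ lies in an $r$-face and every vertex of $\Gamma$ in an $s$-face; this purity lets me replace $\sigma$ by a subface of dimension $\leq r$ and $\tau$ by one of dimension $\leq s$ whose join still contains the original face, up to the skeletal cutoff. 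Once this identity is in hand, it suffices to prove the lemma in the case $\Sigma=\skel_r\Sigma$ and $\Gamma=\skel_s\Gamma$, i.e.\ when $\Sigma$ and $\Gamma$ are themselves pure vertex-decomposable of dimensions $r$ and $s$.

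With that reduction, I would induct. If $\Sigma$ is a simplex on $\leq r+1$ vertices and $\Gamma$ a simplex on $\leq s+1$ vertices, then $\Sigma*\Gamma$ is a simplex of dimension $\leq r+s+1$ and its $(r+s+1)$-skeleton is either that simplex or the boundary-type truncation, which is vertex-decomposable directly. Otherwise at least one factor, say $\Sigma$, has a shedding vertex $v$ with $\Sigma\setminus v$ and $\link_\Sigma v$ vertex-decomposable. I would then show $v$ is a shedding vertex of $\skel_{r+s+1}(\Sigma*\Gamma)$ and identify the two pieces: $\link_{\Sigma*\Gamma}v=(\link_\Sigma v)*\Gamma$ and $(\Sigma*\Gamma)\setminus v=(\Sigma\setminus v)*\Gamma$. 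Applying the inductive hypothesis of Lemma~\ref{lem:JoinOfVDSkelsIsVD} to these smaller joins (noting $\link_\Sigma v$ is pure of dimension $r-1$ and $\Sigma\setminus v$ is pure of dimension $r$, by the standard properties of vertex-decomposable complexes) would exhibit both $\skel_{r+s}(\link_\Sigma v*\Gamma)$ and $\skel_{r+s+1}((\Sigma\setminus v)*\Gamma)$ as vertex-decomposable, which are exactly the link and deletion of $v$ in $\skel_{r+s+1}(\Sigma*\Gamma)$.

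The main obstacle will be the purity bookkeeping at two points: verifying that the shedding and link operations interact correctly with the skeletal truncation (so that $\link_{\skel_{r+s+1}(\Sigma*\Gamma)}v$ really equals $\skel_{r+s}$ of the join of the factor-links, with the degree shifted by exactly one), and confirming that deleting a vertex or passing to a link preserves the purity hypotheses that the inductive statement requires. These are the places where the separate dimension cutoffs $r$ and $s$ must be tracked carefully, since a naive argument that ignores purity would fail precisely because $\dim\sigma+\dim\tau\leq r+s$ does not decouple into the two individual bounds.
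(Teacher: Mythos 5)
Your key reduction is false, and your own first paragraph contains the reason why. You correctly observe that a face $\sigma\cup\gamma$ of $\Sigma*\Gamma$ with $\dim\sigma+\dim\gamma\leq r+s$ need not satisfy $\dim\sigma\leq r$ and $\dim\gamma\leq s$ separately; but the claimed identity $\skel_{r+s+1}(\Sigma*\Gamma)=\skel_{r+s+1}\bigl((\skel_{r}\Sigma)*(\skel_{s}\Gamma)\bigr)$ asserts exactly that decoupling. Concretely, take $\Sigma$ to be a simplex of dimension $r+1$ and $\Gamma$ a simplex of dimension $s$. Then $\skel_{r}\Sigma=\partial\Sigma$ and $\skel_{s}\Gamma=\Gamma$ are pure and vertex-decomposable, but the top face $\sigma$ of $\Sigma$ has dimension $r+1\leq r+s+1$, so it lies in $\skel_{r+s+1}(\Sigma*\Gamma)$ while it does not lie in $(\skel_{r}\Sigma)*(\skel_{s}\Gamma)$. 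No ``replace $\sigma$ by a subface'' maneuver can repair this: equality of simplicial complexes means equality of face sets, and the oversized faces of $\Sigma$ genuinely belong to $\skel_{r+s+1}(\Sigma*\Gamma)$. Consequently your induction, which runs entirely inside the reduced setting, proves vertex-decomposability of $\skel_{r+s+1}\bigl((\skel_{r}\Sigma)*(\skel_{s}\Gamma)\bigr)$, which is in general a proper subcomplex of the complex in the statement, and the lemma does not follow.

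The missing idea is to stay in the ambient complex and do the case split inside the shedding-vertex verification; this is how the paper argues (it proves the more general $k$-decomposable version, Lemma \ref{lem:JoinOfkDSkelsIskD}, by exactly this route). Take $v$ a shedding vertex of $\skel_{r}\Sigma$, and let $\sigma\cup\gamma$ be a face of $\skel_{r+s+1}(\Sigma*\Gamma)$ with $v\in\sigma$. If $\dim\sigma\leq r$, the shedding property in $\skel_{r}\Sigma$ gives a vertex $w$ of $\Sigma$ with $(\sigma\setminus v)\cup w\in\skel_{r}\Sigma$. If instead $\dim\sigma>r$, then $\dim\gamma<s$, and purity of $\skel_{s}\Gamma$ \emph{in dimension} $s$ gives a vertex $w$ of $\Gamma$ with $\gamma\cup w\in\Gamma$; in either case $\bigl((\sigma\cup\gamma)\setminus v\bigr)\cup w$ is again a face of $\skel_{r+s+1}(\Sigma*\Gamma)$, so $v$ is a shedding vertex there. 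This exchange into the \emph{other} factor is precisely where the non-decoupling you flagged gets absorbed. After that, your remaining bookkeeping is correct and coincides with the paper's: the deletion is $\skel_{r+s+1}\bigl((\Sigma\setminus v)*\Gamma\bigr)$ and the link is $\skel_{r+s}\bigl((\link_{\Sigma}v)*\Gamma\bigr)$, and the inductive hypotheses are supplied by $(\skel_{r}\Sigma)\setminus v=\skel_{r}(\Sigma\setminus v)$ (pure by the shedding condition, vertex-decomposable by hypothesis) and $\link_{\skel_{r}\Sigma}v=\skel_{r-1}(\link_{\Sigma}v)$, with parameters $(r,s)$ and $(r-1,s)$ respectively.
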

We will prove a generalization of Lemma \ref{lem:JoinOfVDSkelsIsVD}.
A \emph{shedding face} is a face $\tau$ such that for any face $\sigma$
containing $\tau$ and any vertex $v\in\tau$, there is a vertex $w\notin\sigma$
such that $(\sigma\setminus v)\cup w$ is a face \cite{Jonsson:2005}.
A complex is recursively defined to be \emph{$k$-decomposable} if
either $\Delta$ is a simplex, or else has a shedding face $\tau$
with $\dim\tau\leq k$ such that both $\Delta\setminus\tau$ and $\link_{\Delta}\tau$
are $k$-decomposable. Thus vertex-decomposability is exactly $0$-decomposability.
Any $k$-decomposable complex is shellable; conversely, any shellable
$d$-dimensional complex is $d$-decomposable \cite{Provan/Billera:1980,Woodroofe:2011b}. 
\begin{lem}
\label{lem:JoinOfkDSkelsIskD}If $\Sigma$ and $\Gamma$ are simplicial
complexes such that $\skel_{r}\Sigma$ and $\skel_{s}\Gamma$ are
pure and $k$-decomposable, then $\skel_{r+s+1}\Sigma*\Gamma$ is
$k$-decomposable.\end{lem}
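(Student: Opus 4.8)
The plan is to generalize the proof of Lemma \ref{lem:JoinOfVDSkelsIsVD}, replacing its shedding vertex by a shedding face of dimension at most $k$, and to induct on the total number of faces of $\Sigma$ and $\Gamma$. Write $\Delta = \skel_{r+s+1}(\Sigma * \Gamma)$. For the base case, if both $\skel_r \Sigma$ and $\skel_s \Gamma$ are simplices then $\Sigma$ and $\Gamma$ are themselves simplices, so $\Sigma * \Gamma$ is a simplex and $\Delta$ is a simplex, hence $k$-decomposable. Otherwise, say $\skel_r \Sigma$ is not a simplex (interchanging the roles of $\Sigma$ and $\Gamma$ if necessary, since the hypotheses are symmetric), and choose a shedding face $\tau$ of $\skel_r \Sigma$ with $\dim \tau \le k$; then $\tau$ is a face of $\Delta$ with $\dim \tau \le k$.

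First I would record the two bookkeeping identities $\Delta \setminus \tau = \skel_{r+s+1}\bigl((\Sigma \setminus \tau) * \Gamma\bigr)$ and $\link_\Delta \tau = \skel_{(r - |\tau|) + s + 1}\bigl((\link_\Sigma \tau) * \Gamma\bigr)$, which hold because $\tau$ lies entirely on the $\Sigma$ side, together with the general facts $(\skel_N \Lambda) \setminus \tau = \skel_N(\Lambda \setminus \tau)$ and $\link_{\skel_N \Lambda} \tau = \skel_{N - |\tau|}(\link_\Lambda \tau)$. To feed these into the inductive hypothesis I need the $r$-skeleton of $\Sigma \setminus \tau$ and the $(r-|\tau|)$-skeleton of $\link_\Sigma \tau$ to be pure and $k$-decomposable. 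These skeleta are exactly $(\skel_r \Sigma) \setminus \tau$ and $\link_{\skel_r \Sigma} \tau$, which are $k$-decomposable since $\tau$ is a shedding face of the $k$-decomposable complex $\skel_r \Sigma$. For purity I would use that the link of a pure complex is pure, together with the separate claim that the deletion of a shedding face from a pure complex is again pure of the same dimension; the latter follows from a short argument showing that a would-be low-dimensional facet of the deletion, combined with a vertex of $\tau$, would violate the shedding exchange property. As $\Sigma \setminus \tau$ and $\link_\Sigma \tau$ each have strictly fewer faces than $\Sigma$, the inductive hypothesis then yields that $\Delta \setminus \tau$ and $\link_\Delta \tau$ are $k$-decomposable.

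The main step, and the one I expect to be the crux, is to verify that $\tau$ is a shedding face of $\Delta$ itself: for every face $F \supseteq \tau$ of $\Delta$ and every $v \in \tau$, I must produce $w \notin F$ with $(F \setminus v) \cup w \in \Delta$. Writing $F = F_\Sigma \cup F_\Gamma$ with $\tau \subseteq F_\Sigma$, the difficulty is that $F_\Sigma$ may have dimension larger than $r$, so the shedding property of $\tau$ in $\skel_r \Sigma$ need not apply to it. I would resolve this by a dimension dichotomy. If $\dim F_\Sigma \le r$, then $F_\Sigma \in \skel_r \Sigma$ and the shedding property of $\tau$ supplies a suitable $w$ on the $\Sigma$ side. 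If instead $\dim F_\Sigma > r$, then from $\dim F_\Sigma + \dim F_\Gamma + 1 = \dim F \le r + s + 1$ we must have $\dim F_\Gamma < s$; since $\skel_s \Gamma$ is pure of dimension $s$, the face $F_\Gamma$ is non-maximal and extends by some vertex $w$ on the $\Gamma$ side, whereupon $(F_\Sigma \setminus v) \cup (F_\Gamma \cup w)$ has the same dimension as $F$ and so lies in $\Delta$. In either case $w \notin F$, as required.

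Having checked that $\tau$ is a shedding face of $\Delta$ with $\dim \tau \le k$ and that both $\Delta \setminus \tau$ and $\link_\Delta \tau$ are $k$-decomposable, I conclude that $\Delta$ is $k$-decomposable, completing the induction. The essential content, beyond the routine bookkeeping, is the dichotomy above: it is precisely where the purity hypothesis on $\skel_s \Gamma$ (and, symmetrically, on $\skel_r \Sigma$) enters, guaranteeing that whenever one factor's face is too large for its own shedding data, the complementary factor has dimensional room to absorb the exchange. Tracking purity carefully under passage to deletions is the other place where care is needed; specializing to $k = 0$ recovers Lemma \ref{lem:JoinOfVDSkelsIsVD}.
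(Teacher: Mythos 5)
Your proposal is correct and follows essentially the same route as the paper's proof: choose a shedding face $\tau$ of $\skel_{r}\Sigma$, verify it sheds in $\skel_{r+s+1}\Sigma*\Gamma$ via exactly the same dimension dichotomy (using purity of $\skel_{s}\Gamma$ when $\dim F_{\Sigma}>r$), record the same deletion and link identities, and induct. You merely spell out details the paper leaves implicit, namely the base case of the induction and the argument that deleting a shedding face preserves purity.
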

\begin{proof}
Let $\tau$ be a shedding face of $\skel_{r}\Sigma$ (the case where
$\tau$ is a shedding face of $\skel_{s}\Gamma$ is symmetric). Let
$\sigma\cup\gamma$ be a face of $\skel_{r+s+1}\Sigma*\Gamma$, where
$\sigma$ is a face of $\Sigma$ containing $\tau$ and $\gamma$
is a face of $\Gamma$. If $\dim\sigma>r$, then by purity of $\skel_{s}\Gamma$
we get that $\gamma$ can be extended by some vertex $w$ of $\Gamma$
to a larger face in $\Gamma$. If $\dim\sigma\leq r$, then by the
shedding face condition there is for any $v\in\tau$ a vertex $w$
of $\Sigma$ with $(\sigma\setminus v)\cup w$ a face of $\skel_{r}\Sigma$.
In either case, for any $v\in\tau$ we can produce a $w$ so that
$((\sigma\cup\gamma)\setminus v)\cup w$ is a face of $\skel_{r+s+1}\Sigma*\Gamma$,
hence $\tau$ is a shedding face in $\skel_{r+s+1}\Sigma*\Gamma$. 

We conclude the proof by remarking that $(\skel_{r}\Sigma)\setminus\tau=\skel_{r}(\Sigma\setminus\tau)$
is pure by the shedding vertex condition, and observing that 
\begin{align*}
(\skel_{r+s+1}\Sigma*\Gamma)\setminus\tau & =\skel_{r+s+1}(\Sigma\setminus\tau)*\Gamma,\mbox{ and}\\
\link_{\left(\skel_{r+s+1}\Sigma*\Gamma\right)}\tau & =\skel_{r+s-\dim\tau}\left(\link_{\Sigma}\tau*\Gamma\right).
\end{align*}
The result then follows by induction. \end{proof}
\begin{cor}
If $\Sigma$ and $\Gamma$ are simplicial complexes such that $\skel_{r}\Sigma$
and $\skel_{s}\Gamma$ are pure and shellable, then $\skel_{r+s+1}\Sigma*\Gamma$
is shellable, and indeed $\max\{r,s\}$-decomposable.\end{cor}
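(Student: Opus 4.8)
The plan is to deduce this corollary directly from Lemma \ref{lem:JoinOfkDSkelsIskD}, using the relationship between shellability and $k$-decomposability recalled just above that lemma. The only real content is arranging for a single value of $k$ to which the lemma applies.

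First I would invoke the fact, noted in the discussion preceding Lemma \ref{lem:JoinOfkDSkelsIskD} (from \cite{Provan/Billera:1980,Woodroofe:2011b}), that any pure shellable complex of dimension $d$ is $d$-decomposable. Applying this to the hypotheses, $\skel_{r}\Sigma$ is pure and shellable, hence $d$-decomposable where $d\leq r$ is the common facet dimension; in particular it is $r$-decomposable. Symmetrically, $\skel_{s}\Gamma$ is $s$-decomposable.

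The key observation I would then make is that $k$-decomposability is monotone in $k$: if a complex is $k$-decomposable, then it is $k'$-decomposable for every $k'\geq k$. This follows by a routine induction on the recursive decomposition, since a shedding face $\tau$ witnessing $k$-decomposability has $\dim\tau\leq k\leq k'$ and so also satisfies the dimension constraint for $k'$-decomposability, while the pieces $\Delta\setminus\tau$ and $\link_{\Delta}\tau$ inherit $k'$-decomposability by the inductive hypothesis. In particular, both $\skel_{r}\Sigma$ and $\skel_{s}\Gamma$ are $\max\{r,s\}$-decomposable.

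With $k=\max\{r,s\}$ the hypotheses of Lemma \ref{lem:JoinOfkDSkelsIskD} are satisfied, so $\skel_{r+s+1}\Sigma*\Gamma$ is $\max\{r,s\}$-decomposable; since every $k$-decomposable complex is shellable, it is in particular shellable, which gives both conclusions at once. I do not anticipate a serious obstacle here: the one point needing care is the monotonicity of $k$-decomposability, which is precisely what lets me unify the two a priori different parameters $r$ and $s$ into the single value $\max\{r,s\}$ that the lemma demands.
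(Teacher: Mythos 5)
Your proposal is correct and matches the paper's intended argument: the corollary is stated there as an immediate consequence of Lemma \ref{lem:JoinOfkDSkelsIskD} together with the cited fact that a shellable $d$-dimensional complex is $d$-decomposable, which is exactly the route you take. Your explicit verification that $k$-decomposability is monotone in $k$ (so that both skeleta are $\max\{r,s\}$-decomposable) is precisely the routine detail the paper leaves implicit.
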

\begin{rem}
An analogue to Lemmas \ref{lem:JoinOfVDSkelsIsVD} and \ref{lem:JoinOfkDSkelsIskD}
for the Cohen-Macaulay property (i.e. for depth) can be proved via
the Künneth formula \cite[Lemma 2.12]{Woodroofe:2011a}. 
\end{rem}

\subsection{Edge labelings}

Studying the behavior of certain labelings of a poset often gives
information about the poset's combinatorics and topology. An \emph{edge
labeling} of $P$ is any map from the cover relations of $P$ to an
ordered label set. Each maximal chain then has an associated label
sequence (reading the labels from bottom to top of the chain), and
we order maximal chains lexicographically according to their label
sequences. An \emph{$EL$-labeling} is an edge labeling such that
every interval has a unique weakly ascending maximal chain, and this
ascending chain is first according to the lexicographic order on maximal
chains in this interval. It is well-known \cite{Bjorner:1980,Bjorner/Wachs:1996}
that a bounded poset with an $EL$-labeling is shellable. 
\begin{rem}
As Wachs discusses in \cite[Remark 3.2.5]{Wachs:2007}, one can alternatively
define $EL$-labelings to have a unique \uline{strictly} ascending
chain on every interval. For our present purposes, it is more helpful
to keep in mind the weakly ascending version.
\end{rem}
A frequently useful extension of the definition of an $EL$-labeling
is as follows. A \emph{rooted cover relation} is a cover relation
$x\lessdot y$ together with a maximal chain $\mathbf{r}$ from $\hat{0}$
to $x$. A \emph{chain-edge labeling} of $P$ is a map from the rooted
cover relations of $P$ to an ordered label set. A \emph{$CL$-labeling}
is a chain-edge labeling obeying similar conditions as for an $EL$-labeling:
i.e., such that every rooted interval has a unique (weakly) ascending
maximal chain, and this ascending chain is first according to the
lexicographic order on all maximal chains in this rooted interval.
A bounded poset with a $CL$-labeling is shellable, and more generally
many of the other useful properties of posets with an $EL$-labeling
may be generalized to posets with a $CL$-labeling \cite{Bjorner/Wachs:1996,Bjorner/Wachs:1997}.

Due to the usefulness of $EL$/$CL$-labelings in constructing shellings,
we sometimes call a poset with such a labeling \emph{$EL$-shellable}
or \emph{$CL$-shellable}.

\section{\label{sec:QuasiLexLabelings}Quasi-$CL$-labelings}

If $y\lessdot z$ is any cover relation and $x$ is left-modular,
then $y\vee x\wedge z$ is either $y$ or $z$. Moreover, if $y\vee x\wedge z=z$
then $\left(y\vee w\right)\wedge z=z$ for any $w>x$, and similarly
for $w<x$ we have $\left(y\vee w\right)\wedge z=y$ if $y\vee x\wedge z=y$.
Henceforth, let 
\[
\mathbf{m}=\left\{ \hat{0}=m_{0}<m_{1}<\dots<m_{r}=\hat{1}\right\} 
\]
 be a (not necessarily maximal) left-modular chain. We see that cover
relations admit a labeling 
\begin{alignat*}{2}
\lambda(y\lessdot z) & = & \,\max & \left\{ i\,:\, y\vee m_{i-1}\wedge z=y\right\} \\
 & = & \min & \left\{ i\,:\, y\vee m_{i}\wedge z=z\right\} .
\end{alignat*}
We will refer to $\lambda$ as the \emph{left-modular labeling} of
$L$ with respect to $\mathbf{m}$. In the case where $\mathbf{m}$
is a maximal chain, $\lambda$ is an $EL$-labeling \cite{Bjorner:1980,Liu:1999}.
\begin{defn}
\label{def:QuasiCL} A \emph{quasi-$CL$-labeling} will be a chain-edge
labeling of a poset $P$ such that on any interval $[x,y]$ with root
$\mathbf{r}$ we have:
\begin{enumerate}
\item Every (weakly) ascending maximal chain is a refinement of a specific
chain 
\[
\mathbf{a}^{\mathbf{r},[x,y]}=\left\{ x=a_{0}^{\mathbf{r},[x,y]}<a_{1}^{\mathbf{r},[x,y]}<\dots<a_{k}^{\mathbf{r},[x,y]}=y\right\} ,\mbox{ that}
\]
 
\item all cover relations on the interval $[a_{i-1}^{\mathbf{r},[x,y]},a_{i}^{\mathbf{r},[x,y]}]$
receive the same label $\alpha_{i}$, and that
\item the maximal extensions of $\mathbf{a}^{\mathbf{r},[x,y]}$ are (strictly)
lexicographically earlier than the other maximal chains on $[x,y]$
with root $\mathbf{r}$. 
\end{enumerate}
\end{defn}
As usual, edge labelings are special cases of chain-edge labelings,
and when $\lambda$ is an edge labeling obeying the above properties
we will call it a \emph{quasi-$EL$-labeling}.

It is immediate that any $CL$-labeling is a quasi-$CL$-labeling,
and that any quasi-$CL$-labeling induces a quasi-$CL$-labeling when
restricted to any rooted interval $[x,y]$. An example of a quasi-$EL$-labeling
that is not an $EL$-labeling is given in Figure \ref{fig:quasiELexample},
where the solid lines represent an edge labeled 1, and the dotted
lines represent an edge labeled 2. We remark that the pictured poset
is a lattice, that the element labeled $m$ is modular, and that the
pictured labeling is the modular labeling with respect to $\hat{0}<m<\hat{1}$. 

\begin{figure}
\includegraphics{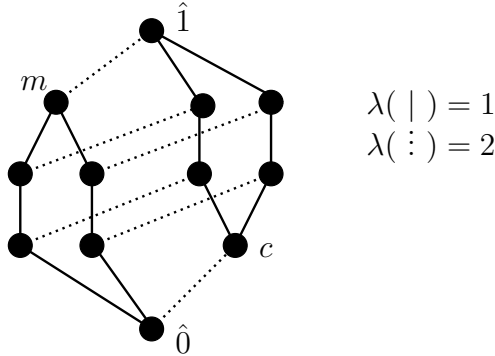}

\caption{A lattice with a quasi-$EL$-labeling.\label{fig:quasiELexample}}
\end{figure}

\smallskip{}

The following lemma is essentially \cite[Lemma 2.4]{Woodroofe:2008}.
For completeness we sketch the proof here.
\begin{lem}
\cite[Lemma 2.4]{Woodroofe:2008} If $\lambda$ is the left-modular
labeling with respect to $\mathbf{m}$, then $\lambda$ is a quasi-$EL$-labeling
with $\mathbf{a}^{[\hat{0},\hat{1}]}=\mathbf{m}$.\end{lem}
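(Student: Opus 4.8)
The plan is to verify the three conditions of Definition~\ref{def:QuasiCL} directly for the left-modular labeling $\lambda$ on the interval $[\hat 0,\hat 1]$, taking $\mathbf{a}^{[\hat 0,\hat 1]}=\mathbf{m}$. First I would establish the basic behavior of $\lambda$ along a single cover relation. The key algebraic input is the dichotomy recorded just before the definition: for left-modular $x$ and a cover $y\lessdot z$, the element $y\vee(x\wedge z)$ is forced to equal either $y$ or $z$, and this value is monotone as $x$ moves up or down the chain $\mathbf m$. This monotonicity is exactly what makes $\lambda(y\lessdot z)$ well-defined as both a $\max$ and a $\min$, and it will be the workhorse for everything that follows.

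Next I would analyze what it means for a maximal chain to be weakly ascending under $\lambda$, and show every such chain refines $\mathbf m$. The idea is that as one reads labels $\lambda_1\le\lambda_2\le\cdots$ up an ascending chain, the label $\lambda(y\lessdot z)=i$ records precisely the index at which $m_i$ first ``captures'' the cover, i.e.\ $y\vee(m_i\wedge z)=z$ while $y\vee(m_{i-1}\wedge z)=y$. I would argue that along a weakly ascending chain the successive elements must pass through each $m_i$ in turn: once the labels have reached value $i$, the current chain element lies at or above $m_{i-1}$ and cannot have risen above $m_i$, since doing so would force a strictly smaller label later, contradicting weak ascent. Making this precise gives condition~(1) with $\mathbf a^{[\hat 0,\hat 1]}=\mathbf m$, and in the process shows that on each subinterval $[m_{i-1},m_i]$ every cover of an ascending chain receives the constant label $i$, which is condition~(2) with $\alpha_i=i$.

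For condition~(3) I would show that any maximal chain refining $\mathbf m$ is weakly ascending (its label sequence is a weakly increasing sequence of the values $1,2,\dots,r$, each block constant), and that any maximal chain \emph{not} refining $\mathbf m$ is strictly lexicographically later. The mechanism is that a chain departing from $\mathbf m$ must at some cover step ``skip ahead'' past an $m_i$ before it was due, producing a label strictly larger than the one a refinement of $\mathbf m$ would have at that position; since the two chains agree below that point, this forces the non-refining chain to come strictly later in lexicographic order. I would note that this simultaneously yields weak ascent of refinements and the lexicographic precedence, since refinements of $\mathbf m$ exhaust the lexicographically-minimal behavior at each step.

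The main obstacle I expect is condition~(3): isolating the precise cover relation where a non-refining chain diverges and confirming that its label there is \emph{strictly} larger than the corresponding label along a refinement of $\mathbf m$. This requires care because the lattice is only left-modular (not necessarily modular or graded), so I cannot appeal to rank functions, and I must rely solely on the meet/join dichotomy and its monotonicity. The delicate point is ruling out a ``tie'' at the divergence step that would allow a non-refining chain to remain weakly ascending or to match lexicographically. Since the lemma is quoted as \cite[Lemma~2.4]{Woodroofe:2008} and only a sketch is promised, I would streamline by citing the one-sided modular identity to reduce each comparison $y\vee(m_i\wedge z)$ to a statement purely about whether the cover ``crosses'' the level $m_i$, and then the strictness of the inequality at divergence becomes essentially a combinatorial consequence of $\mathbf m$ being a chain.
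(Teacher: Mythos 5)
Your within-interval analysis is sound in outline, and the algebraic engine you use (the dichotomy $y\vee(m_i\wedge z)\in\{y,z\}$ and its monotonicity in $i$) is exactly the paper's. But there is a genuine gap: Definition~\ref{def:QuasiCL} quantifies over \emph{every} interval $[x,y]$ of $P$, while your entire plan verifies conditions (1)--(3) only on $[\hat 0,\hat 1]$. The paper's proof is built around this point: for each interval it defines
\[
\mathbf{a}^{[x,y]}=\left\{ x\vee (m_{i}\wedge y)\,:\,0\leq i\leq r\right\} ,
\]
i.e.\ the \emph{projection} of $\mathbf{m}$ into $[x,y]$ (the chain $\mathbf{m}$ itself generally does not lie in the interval), and then checks that maximal extensions of this projected chain are weakly ascending and lexicographically first, that covers on $[x\vee(m_{i-1}\wedge y),x\vee(m_i\wedge y)]$ all get label $i$, and that an atom $w$ of $[x,y]$ with $w\nleq x\vee m_k$ (where $m_k$ is minimal with $x\lneq x\vee(m_k\wedge y)$) forces a descent in any chain through it. Your top-interval argument does not subsume this: a maximal chain of $L$ that is non-ascending (and hence invisible to the conditions on $[\hat 0,\hat 1]$) can restrict to a weakly ascending chain on a subinterval, and the definition must constrain it there. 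The interval-level statement is also what the rest of the paper actually uses --- Lemma~\ref{lem:qELTech} and the induction in Theorem~\ref{thm:QuasiELShellsSkeleton} repeatedly invoke the chains $\mathbf{a}^{[\hat 0,x]}$, $\mathbf{a}^{[x,\hat 1]}$, $\mathbf{a}^{[u,z]}$ on proper subintervals.

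If you wanted to rescue your plan by reduction, you would need to prove two additional facts: that the restriction of $\lambda$ to $[x,y]$ coincides with the left-modular labeling of $[x,y]$ taken with respect to the projected chain, and that the projected chain is itself left-modular in $[x,y]$. Neither is free (each is a computation with the modular-pair identity), and together they amount to roughly the same work as the paper's direct verification on intervals --- which is why the paper argues interval-by-interval from the start rather than treating $[\hat 0,\hat 1]$ as a special case. As a smaller point, your condition-(1) heuristic should be sharpened to the statement that in a weakly ascending chain the element at the last cover labeled $\leq i$ equals $x\vee(m_i\wedge y)$ exactly (both inequalities are needed, via the stabilization of $m_i\wedge c_j$ above that point); ``lies at or above $m_{i-1}$ and not above $m_i$'' is not by itself enough to force membership of the projected chain elements in the chain.
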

\begin{proof}
[Sketch.] Let $\mathbf{a}^{[x,y]}$ be the chain consisting of $\{x\vee m_{i}\wedge y\,:\,0\leq i\leq r\}$.
It is easy to see that every maximal extension of $\mathbf{a}^{[x,y]}$
is weakly ascending, that such chains are lexicographically earlier
than all other chains, and that every cover relation on $[x\vee m_{i-1}\wedge y,x\vee m_{i}\wedge y]$
receives label $i$.

Conversely, if $m_{k}$ is the least element of $\mathbf{m}$ with
$x\lneq x\vee m_{k}\wedge y$, and $w$ is an atom of the interval
$[x,y]$ with $w\nleq x\vee m_{k}$ (i.e., $w$ not on an extension
of $\mathbf{a}^{[x,y]}$), then $w<w\vee m_{k}\wedge y$. By minimality
of $m_{k}$, any maximal chain on $[x,y]$ that begins with $x\lessdot w$
contains a descent.
\end{proof}
The left-modular labeling was used in \cite{Woodroofe:2008} only
as a starting point to be refined to an $EL$-labeling of the subgroup
lattice of a finite solvable group. We notice that any quasi-$EL$-labeling
$\lambda_{q}$ of a bounded poset with an $EL$-labeling $\lambda_{r}$
can be refined to an $EL$-labeling by taking the new labeling $\lambda=(\lambda_{q},\lambda_{r})$,
where the labels are ordered lexicographically. Similarly for quasi-$CL$-labelings
and $CL$-labelings.
\begin{example}
Let $a_{1},\dots,a_{n}$ be any ordering of the atoms of a geometric
lattice $L$. It is well-known \cite[Section 3.2.3]{Wachs:2007} that
$\lambda_{*}(x\lessdot y)=\min\{i\,:\, a_{i}\vee x=y\}$ is an $EL$-labeling
of $L$. We notice that this $\lambda_{*}$ can be viewed as a refinement
of the modular quasi-$EL$-labeling $\lambda_{q}$ with respect to
the chain $\hat{0}<a_{1}<\hat{1}$, in the sense that it has the same
ascents and descents as $(\lambda_{q},\lambda_{*})$.
\end{example}

\section{\label{sec:Shellings}Shellings and vertex-decomposability}

We now extend the proof of Björner and Wachs \cite{Bjorner/Wachs:1997}
that a bounded poset with a $CL$-labeling is vertex-decomposable.
We first notice:
\begin{lem}
\label{lem:quasiELfirstdiff}If $\lambda$ is a quasi-$CL$-labeling
with $x$ an atom on an ascending chain of $[\hat{0},\hat{1}]$, then
$\lambda(\hat{0},x)<\lambda(\hat{0},y)$ for any atom $y$ not on
any ascending chain.
\end{lem}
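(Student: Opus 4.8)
The plan is to pin down the bottom label of $x$ and then use Definition~\ref{def:QuasiCL}(3) to force any non-ascending atom to have a strictly larger bottom label. Write $\mathbf{a}=\mathbf{a}^{[\hat{0},\hat{1}]}=\{\hat{0}=a_{0}<a_{1}<\dots<a_{k}=\hat{1}\}$, and let $\alpha_{i}$ be the common label of the cover relations on $[a_{i-1},a_{i}]$. Since $x$ is an atom on an ascending chain and every ascending chain refines $\mathbf{a}$, the cover $\hat{0}\lessdot x$ lies on the first segment $[\hat{0},a_{1}]$, so $\lambda(\hat{0},x)=\alpha_{1}$ by part~(2). The same observation identifies the atoms lying on ascending chains as exactly the atoms $\leq a_{1}$; thus the hypothesis on $y$ gives $y\not\leq a_{1}$, and consequently no maximal chain through $\hat{0}\lessdot y$ refines $\mathbf{a}$, since a refinement of $\mathbf{a}$ is weakly ascending and would place $y$ on an ascending chain.

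Next I would build a convenient test chain. Applying the quasi-$CL$ property to the interval $[y,\hat{1}]$ with root $\hat{0}\lessdot y$ produces an ascending chain there; let $\mathbf{c}$ be $\hat{0}\lessdot y$ followed by a maximal extension of this ascending chain, so that $\mathbf{c}$ has label sequence $(\lambda(\hat{0},y),\beta_{1},\beta_{2},\dots)$ with $\beta_{1}\leq\beta_{2}\leq\cdots$. Because $\mathbf{c}$ does not refine $\mathbf{a}$, part~(3) gives $\mathbf{a}'\lexlt\mathbf{c}$ for every maximal extension $\mathbf{a}'$ of $\mathbf{a}$. Here each such $\mathbf{a}'$ has a label sequence beginning $(\alpha_{1},\gamma,\dots)$ with $\gamma\geq\alpha_{1}$, since $\mathbf{a}'$ is weakly ascending and $[\hat{0},\hat{1}]$ has length at least $2$ (it contains the two distinct atoms $x$ and $y$ below $\hat{1}$). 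Comparing first labels immediately rules out $\lambda(\hat{0},y)<\alpha_{1}$, since that would give $\mathbf{c}\lexlt\mathbf{a}'$, so it remains only to exclude equality.

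The crux is the case $\lambda(\hat{0},y)=\alpha_{1}$, and here I would play the bottom label of $\mathbf{c}$ against its second label. If $\lambda(\hat{0},y)=\alpha_{1}>\beta_{1}$, then $\mathbf{c}$ and $\mathbf{a}'$ agree in the first coordinate while $\mathbf{c}$ is strictly smaller in the second, because $\beta_{1}<\alpha_{1}\leq\gamma$; this gives $\mathbf{c}\lexlt\mathbf{a}'$, contradicting part~(3). Hence $\lambda(\hat{0},y)=\alpha_{1}$ forces $\alpha_{1}\leq\beta_{1}$, which makes $\mathbf{c}$ weakly ascending in its entirety and therefore places $y$ on an ascending chain, contradicting the hypothesis on $y$. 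Ruling out both $\lambda(\hat{0},y)<\alpha_{1}$ and $\lambda(\hat{0},y)=\alpha_{1}$ leaves $\lambda(\hat{0},x)=\alpha_{1}<\lambda(\hat{0},y)$, as desired. I expect the main obstacle to be exactly this equal-first-label analysis: the lexicographic inequality of part~(3) is only strict, so it yields nothing at the first coordinate when the bottom labels coincide, and one must pass to the second coordinate, exploiting both the constancy of labels along the segments of $\mathbf{a}$ and the weak ascent of the chain above $y$.
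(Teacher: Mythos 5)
Your proof is correct, and it is essentially the argument the paper has in mind: the paper omits the proof, citing that it is ``exactly similar'' to Bj\"orner's Proposition 2.5, and your argument is precisely that standard one---prepend $\hat{0}\lessdot y$ to an ascending chain of the rooted interval $[y,\hat{1}]$, compare lexicographically with the maximal extensions of $\mathbf{a}^{[\hat{0},\hat{1}]}$ via Definition~\ref{def:QuasiCL}(3), and rule out equality of first labels by passing to the second coordinate---adapted correctly to the quasi-$CL$ axioms (refinements of $\mathbf{a}$ replacing the unique ascending chain). Your handling of the equal-first-label case, forcing $\mathbf{c}$ to be weakly ascending and hence contradicting the hypothesis on $y$, is exactly the right substitute for the uniqueness argument available in the genuine $EL$/$CL$ setting.
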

\noindent The proof of Lemma \ref{lem:quasiELfirstdiff} is exactly
similar to that of \cite[Proposition 2.5]{Bjorner:1980}, and is omitted. 

We now state a technical lemma, paralleling \cite[Lemma 11.5]{Bjorner/Wachs:1997}. 
\begin{lem}
\label{lem:qELTech}Let $P$ be a bounded poset with a quasi-$CL$-labeling
$\lambda$, and let $x$ be the descent of a lexicographically greatest
member $\mathbf{c}$ of the collection of maximal chains with a single
descent. Then
\begin{enumerate}
\item \label{enu:qELTech-TrivInterval}Every chain on $[\hat{0},x]$ is
ascending, hence an extension of $\mathbf{a}^{[\hat{0},x]}$.
\item \label{enu:qELTech-NoAsc}No maximal chain $\mathbf{d}$ with $x\in\mathbf{d}$
has an ascent at $x$.
\item \label{enu:qELTech-KeepsCovers}If $w\lessdot x\lessdot z$, then
there is a $y\neq x$ such that $w<y<z$.
\item \label{enu:qELTech-Restricts}$\lambda$ restricts to a quasi-$CL$-labeling
of the induced subposet $P\setminus x$.
\end{enumerate}
\end{lem}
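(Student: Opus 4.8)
The plan is to establish the four claims in the order listed, since each feeds the next: (1) is the base case, (2) uses (1), (3) uses (2), and (4) uses both (2) and (3). Throughout, the single recurring device is to take a hypothetical ``bad'' configuration and splice onto it an ascending tail, thereby producing a maximal chain of $P$ with \emph{at most one} descent that is strictly lexicographically larger than $\mathbf{c}$. Such a chain is impossible: if it has exactly one descent it contradicts the maximality of $\mathbf{c}$, and if it is weakly ascending it contradicts property (3) of Definition~\ref{def:QuasiCL} (ascending chains are lexicographically first, so cannot lie above the descent-bearing $\mathbf{c}$). This mirrors the structure of \cite[Lemma 11.5]{Bjorner/Wachs:1997}.

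For (1), note first that $\mathbf{c}|_{[\hat{0},x]}$ is weakly ascending, since the unique descent of $\mathbf{c}$ sits at $x$. Suppose toward a contradiction that some maximal chain $\mathbf{e}$ of $[\hat{0},x]$ is not ascending. By property (3) applied to $[\hat{0},x]$ the chain $\mathbf{e}$ is lexicographically later than the ascending chains, so there is a first position $j$ at which the label of $\mathbf{e}$ strictly exceeds the corresponding label of $\mathbf{a}^{[\hat{0},x]}$. The crucial observation is that $\mathbf{e}$ is still weakly ascending \emph{through} position $j$: below $j$ it agrees with an ascending chain, and its $j$-th label is strictly larger than the $j$-th label of that chain, hence larger than its own $(j-1)$-st label. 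I would therefore truncate $\mathbf{e}$ at its $j$-th element and append any ascending chain up to $\hat{1}$; the resulting chain $\mathbf{s}$ is ascending except possibly at the splice point, so it has at most one descent, while $\mathbf{s}$ and $\mathbf{c}$ first differ at position $j$ in favor of $\mathbf{s}$, giving $\mathbf{c}\lexlt\mathbf{s}$. This is the contradiction above, so every maximal chain of $[\hat{0},x]$ is ascending and hence refines $\mathbf{a}^{[\hat{0},x]}$.

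Parts (2) and (3) then follow the same pattern. By (1) the label entering $x$ is the fixed top label, say $\alpha_k$, of $\mathbf{a}^{[\hat{0},x]}$; so if a chain $\mathbf{d}$ ascended at $x$ through $w\lessdot x\lessdot z$ we would have $\lambda(x\lessdot z)\ge\alpha_k$, and I would \emph{keep} the portion of $\mathbf{d}$ below $x$, follow it by the cover $x\lessdot z$, and then ascend to $\hat{1}$. This chain again has at most one descent, but overtakes $\mathbf{c}$ at the step just above $x$, where $\mathbf{c}$ drops below $\alpha_k$ while ours stays at least $\alpha_k$; the same contradiction proves (2). For (3), extend a given $w\lessdot x\lessdot z$ to a maximal chain of $P$; by (2) it does not ascend at $x$, so $\lambda(w\lessdot x)>\lambda(x\lessdot z)$, whence $w\lessdot x\lessdot z$ is not an ascending chain of $[w,z]$. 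An ascending maximal chain of $[w,z]$ exists and, being different from $w\lessdot x\lessdot z$, passes through some element $y\neq x$ with $w<y<z$, as required.

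Finally, (4) is a formal consequence of (2) and (3). By (3), deleting $x$ creates no new cover relations, so $\lambda$ restricts to the rooted cover relations of $P\setminus x$, and the maximal chains of any interval $[u,v]$ of $P\setminus x$ are exactly the maximal chains of $[u,v]$ in $P$ that avoid $x$. By (2) no weakly ascending maximal chain of $P$ can pass through $x$, so the distinguished chain $\mathbf{a}^{\mathbf{r},[u,v]}$ of $P$ already avoids $x$ and can be reused in $P\setminus x$; the three defining properties then transfer verbatim, since the surviving ascending chains still refine $\mathbf{a}$, the piece-labels $\alpha_i$ are unchanged, and ``ascending-first'' can only be preserved when one passes to a subfamily of chains. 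I expect the genuine difficulty to lie entirely in part (1): one must choose the truncation point so that the spliced chain carries \emph{at most one} descent yet remains lexicographically above $\mathbf{c}$. A secondary point requiring care is that, in the true chain-edge (as opposed to edge) setting, rerooting changes labels, which is exactly why in (2) I retain the given chain below $x$ rather than substituting $\mathbf{c}|_{[\hat{0},x]}$.
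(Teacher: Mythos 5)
Your parts (1), (3), and (4) are essentially correct, and your part (1) is in fact handled more carefully than the paper's own terse argument: by splicing at the first position where $\mathbf{e}$'s label exceeds the corresponding label of the ascending chain, you guarantee the spliced chain has at most one descent, which is exactly the point one must be careful about (the paper splices at $x$ itself). Parts (3) and (4) run just as in the paper.

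The genuine gap is in part (2). You keep the portion of $\mathbf{d}$ below $x$ and claim the spliced chain ``overtakes $\mathbf{c}$ at the step just above $x$.'' That presumes the label sequence of $\mathbf{d}$ restricted to $[\hat{0},x]$ agrees with that of $\mathbf{r}=\mathbf{c}|_{[\hat{0},x]}$ all the way through the step into $x$. But part (1) only tells you that $\mathbf{d}|_{[\hat{0},x]}$ is ascending, i.e.\ a refinement of $\mathbf{a}^{[\hat{0},x]}$, and since $P$ need not be graded, two such refinements can cross the segments $[a_{i-1},a_i]$ with different numbers of cover relations and hence carry different label words: for instance $(\alpha_1,\alpha_1,\alpha_2,\dots)$ versus $(\alpha_1,\alpha_2,\dots)$. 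Every ascending chain other than the lexicographically greatest one is strictly lex-smaller than $\mathbf{r}$ (maximality of $\mathbf{c}$ forces $\mathbf{r}$ to be the lex-greatest ascending chain on $[\hat{0},x]$), so if $\mathbf{d}$'s lower half is one of these, your spliced chain loses the lexicographic comparison to $\mathbf{c}$ at a position strictly below $x$, and no contradiction results. The comparison is decided ``just above $x$'' only when the two lower words coincide, e.g.\ when $P$ is graded --- but the whole point of the lemma is the general bounded case. The cure in the quasi-$EL$ case is precisely the substitution you rejected: replace the part below $x$ by $\mathbf{r}$ itself, which is harmless because edge labels do not depend on the root, and which is how the paper opens its proof of (2). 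In the genuine chain-edge case that substitution does change the label of $x\lessdot z$, and this is where the real work of the paper's proof lies: it establishes the reduction claim that if \emph{any} maximal chain has an ascent at $x$ into $z$, then $\mathbf{r}\cup\{z\}$ already has an ascent at $x$, by a delicate argument using Lemma \ref{lem:quasiELfirstdiff} and the uniqueness properties of ascending chains. Your proposal contains nothing playing this role; your closing remark that rerooting changes labels correctly identifies the difficulty, but keeping $\mathbf{d}$'s lower half does not overcome it --- it merely trades the rerooting problem for a broken lexicographic step.
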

\begin{proof}
Let $\mathbf{r}$ be the restriction of $\mathbf{c}$ to $[\hat{0},x]$. 

(1.) We notice that $\mathbf{r}$ is a lexicographically greatest
member among all maximal chains of $[\hat{0},x]$, as otherwise a
lexicographically greater chain $\mathbf{r}'$ on $[\hat{0},x]$ together
with a maximal extension of $\mathbf{a}^{\mathbf{r}',[x,\hat{1}]}$
would be lexicographically greater than $\mathbf{c}$ (and have a
single descent). As $\mathbf{r}$ is ascending, it follows from the
definition that all maximal chains on $[\hat{0},x]$ must be ascending.\smallskip{}

(2.) First, suppose that for some $z\gtrdot x$ the chain $\mathbf{r}\cup\{z\}$
has an ascent at $x$. Then further extending $\mathbf{r}\cup\{z\}$
with a maximal extension of $\mathbf{a}^{\mathbf{r}\cup\{z\},[z,\hat{1]}}$
gives a chain with a single descent that is lexicographically greater
than $\mathbf{c}$, contradicting the choice of $\mathbf{c}$. In
the case where $\lambda$ is a quasi-$EL$-labeling, the result now
easily follows.

In the general quasi-$CL$ case, we claim that if some other maximal
chain $\mathbf{d}$ has an ascent at $x$, then $\mathbf{r}\cup\{z\}$
also has an ascent at $x$ (where $z\gtrdot x$ in $\mathbf{d}$).
Suppose not, and let $u<x$ be the last element of $\mathbf{c}$ such
that $\mathbf{c}$ restricted to $[\hat{0},u]$ can be extended to
an ascending chain $\mathbf{c}'$ on $[\hat{0},z]$. (We notice that
$x$ may not be in $\mathbf{c}'$.) Further let $\gamma$ and $\gamma'$
be the labels of the cover relations following $u$ in $\mathbf{c}$
and $\mathbf{c}'$ respectively. 

By Lemma \ref{lem:quasiELfirstdiff} on $[u,z]$, we have that $\gamma>\gamma'$.
As $\mathbf{c}$ and $\mathbf{c}'$ agree on $[\hat{0},u]$ and both
have an ascent at $u$, we see that the ascent in $\mathbf{c}$ at
$u$ must be strict, hence that $u\in\mathbf{a}^{[\hat{0},x]}$. By
part (\ref{enu:qELTech-TrivInterval}) we have that $\mathbf{d}$
is ascending on $[\hat{0},x]$ (and indeed on $[\hat{0},z]$). It
follows from definition that $u\in\mathbf{d}$, thus that the cover
relation following $u$ in $\mathbf{d}$ receives the same label $\gamma$
as that following $u$ in $\mathbf{c}$. 

Moreover, $\mathbf{d}$ has a strict ascent at $u$, hence $u\in\mathbf{a}^{[\hat{0},z]}$.
But then the cover relation following $u$ in $\mathbf{d}$ receives
the same label $\gamma'$ as that following $u$ in $\mathbf{c}'$.
Thus $\gamma=\gamma'$, our desired contradiction.\smallskip{}

(3.) Any such $w\lessdot x\lessdot z$ is a descent with respect to
any root, hence there is another (ascending) chain on $[w,z]$. We
take $y$ from this chain.\smallskip{}

(4.) Part (\ref{enu:qELTech-KeepsCovers}) shows that the cover relations
of $P\setminus x$ are exactly those of $P$ that do not involve $x$,
so the restriction of $\lambda$ is a chain edge labeling. Part (\ref{enu:qELTech-NoAsc})
shows that $x$ is not contained in an ascending chain on any rooted
interval, so the restriction remains a quasi-$CL$-labeling.
\end{proof}
The $x$ of Lemma \ref{lem:qELTech} will be the shedding vertex in
our vertex-decomposability proof, thus our shelling order is (perhaps
unsurprisingly) essentially lexicographic.\smallskip{}

If $\mathbf{c}$ is a maximal chain and $\alpha$ is a label, we say
that $\alpha$ is \emph{repeated} on $\mathbf{c}$ if at least two
cover relations of $\mathbf{c}$ are labeled with $\alpha$.
\begin{thm}
\label{thm:QuasiELShellsSkeleton}Let $P$ be a bounded poset with
a quasi-$CL$-labeling $\lambda$. For a maximal chain $\mathbf{c}$
let $\ell_{0}(\mathbf{c})$ denote the number of distinct labels,
and $\ell_{1}(\mathbf{c})$ denote the number of repeated labels.
If $r=\min_{\mathbf{c}}\left(\ell_{0}(\mathbf{c})+\ell_{1}(\mathbf{c})\right)$,
then $\skel_{r-2}\vert P\vert$ is vertex decomposable.\end{thm}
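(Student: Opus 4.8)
The plan is to induct on the number of elements of $P$, using the vertex $x$ furnished by Lemma~\ref{lem:qELTech} as a shedding vertex. First I would rewrite the statistic as $\ell_0(\mathbf{c})+\ell_1(\mathbf{c})=\sum_\alpha \min(m_\alpha,2)$, where $m_\alpha$ is the number of cover relations of $\mathbf{c}$ carrying the label $\alpha$: a label used once contributes $1$ and a label used at least twice contributes $2$. Since $\length(\mathbf{c})=\sum_\alpha m_\alpha\ge\sum_\alpha\min(m_\alpha,2)\ge r$, every maximal chain has length at least $r$, so every facet of $\vert P\vert$ has dimension at least $r-2$ and $\skel_{r-2}\vert P\vert$ is pure of dimension $r-2$. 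I will also use the standard fact that a lower skeleton of a pure vertex-decomposable complex is again vertex-decomposable; this holds because the shedding condition on a vertex is inherited by skeleta, while $\link$ and deletion commute with truncation. The base case is when no maximal chain of $P$ descends: then every maximal chain is ascending, every element is comparable to each $a_i^{[\hat{0},\hat{1}]}$, so $\vert P\vert$ is a join of the order complexes of the blocks $[a_{i-1}^{[\hat{0},\hat{1}]},a_i^{[\hat{0},\hat{1}]}]$, and the conclusion follows from Lemma~\ref{lem:JoinOfVDSkelsIsVD}, the recursion bottoming out at single-label intervals whose relevant skeleton is at most $0$-dimensional and hence trivially vertex-decomposable.

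For the inductive step let $x$ be the vertex of Lemma~\ref{lem:qELTech}. I first check that $x$ is a shedding vertex of $\skel_{r-2}\vert P\vert$: given a face $\sigma\ni x$, let $u,v$ be the neighbors of $x$ in $\sigma\cup\{\hat{0},\hat{1}\}$. If $u\lessdot x\lessdot v$ then part~(\ref{enu:qELTech-KeepsCovers}) produces a $y\neq x$ with $u<y<v$; otherwise an intermediate element of $(u,v)$ other than $x$ exists for trivial reasons. Either way $(\sigma\setminus x)\cup\{y\}$ is a chain of the same cardinality, hence a face of $\skel_{r-2}\vert P\vert$. Next, part~(\ref{enu:qELTech-Restricts}) says $\lambda$ restricts to a quasi-$CL$-labeling of $P\setminus x$, and part~(\ref{enu:qELTech-KeepsCovers}) guarantees both that the cover relations of $P\setminus x$ are exactly those of $P$ avoiding $x$ and that $x$ cannot be inserted into any maximal chain of $P\setminus x$ (were $u,v$ consecutive with $u<x<v$, then $(u,v)=\{x\}$ in $P$, contradicting part~(\ref{enu:qELTech-KeepsCovers})). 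Thus the maximal chains of $P\setminus x$ are precisely the maximal chains of $P$ that avoid $x$, so the minimum $r'$ of $\ell_0+\ell_1$ over $P\setminus x$ satisfies $r'\ge r$. By induction $\skel_{r'-2}\vert P\setminus x\vert$ is vertex-decomposable, and its lower skeleton $(\skel_{r-2}\vert P\vert)\setminus x=\skel_{r-2}\vert P\setminus x\vert$ is therefore vertex-decomposable.

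It remains to treat the link. Since $\link_{\vert P\vert}x=\vert[\hat{0},x]\vert*\vert[x,\hat{1}]\vert$, we have
\[
\link_{\skel_{r-2}\vert P\vert}x=\skel_{r-3}\bigl(\vert[\hat{0},x]\vert*\vert[x,\hat{1}]\vert\bigr).
\]
Writing $r_1,r_2$ for the minima of $\ell_0+\ell_1$ on $[\hat{0},x]$ and $[x,\hat{1}]$, the purity computation above shows $\skel_{r_1-2}\vert[\hat{0},x]\vert$ and $\skel_{r_2-2}\vert[x,\hat{1}]\vert$ are pure, and the inductive hypothesis applied to these proper subintervals makes them vertex-decomposable. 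Lemma~\ref{lem:JoinOfVDSkelsIsVD} then yields that $\skel_{r_1+r_2-3}\bigl(\vert[\hat{0},x]\vert*\vert[x,\hat{1}]\vert\bigr)$ is vertex-decomposable. The crucial point is the inequality $r\le r_1+r_2$: concatenating chains $\mathbf{c}_1,\mathbf{c}_2$ realizing $r_1,r_2$ gives a maximal chain $\mathbf{c}$ of $P$ with $m_\alpha(\mathbf{c})=m_\alpha(\mathbf{c}_1)+m_\alpha(\mathbf{c}_2)$, and subadditivity of $t\mapsto\min(t,2)$ gives $(\ell_0+\ell_1)(\mathbf{c})\le(\ell_0+\ell_1)(\mathbf{c}_1)+(\ell_0+\ell_1)(\mathbf{c}_2)=r_1+r_2$. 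Hence $r-3\le r_1+r_2-3$, so the link is a lower skeleton of a vertex-decomposable complex, and is itself vertex-decomposable. With shedding vertex, deletion, and link all in hand, $\skel_{r-2}\vert P\vert$ is vertex-decomposable.

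I expect the main obstacle to be the dimension bookkeeping that knits the two intervals together: isolating the statistic $\sum_\alpha\min(m_\alpha,2)$, proving its subadditivity under concatenation to obtain $r\le r_1+r_2$, and aligning this with the exponent shift in Lemma~\ref{lem:JoinOfVDSkelsIsVD} so that the link lands in exactly $\skel_{r-3}$. The secondary points needing care are that $x$ remains a shedding vertex after passing to the skeleton, that truncation preserves vertex-decomposability and purity, and that the non-descending base case really decomposes as the asserted join.
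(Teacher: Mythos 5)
Your proof is correct and takes essentially the same route as the paper's: induction on the number of elements of $P$, with the base case handled by the join decomposition along $\mathbf{a}^{[\hat{0},\hat{1}]}$ together with Lemma~\ref{lem:JoinOfVDSkelsIsVD}, and the inductive step using the vertex $x$ of Lemma~\ref{lem:qELTech} as a shedding vertex, treating the deletion $\skel_{r-2}\vert P\setminus x\vert$ by induction and the link as $\skel_{r-3}\left(\vert[\hat{0},x]\vert*\vert[x,\hat{1}]\vert\right)$. The only real differences are presentational: you make explicit the subadditivity inequality $r\le r_1+r_2$ (via the statistic $\sum_\alpha\min(m_\alpha,2)$) and the standard fact that lower skeleta of pure vertex-decomposable complexes are vertex-decomposable, both of which the paper uses implicitly, the paper instead pinning down the value on $[\hat{0},x]$ exactly as $i+j$ via Lemma~\ref{lem:qELTech}~Part~(\ref{enu:qELTech-TrivInterval}).
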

\begin{proof}
We first remark that the condition implies immediately that all maximal
chains contain at least $r+1$ elements, hence that $\skel_{r-2}\vert P\vert$
is pure. Thus, Lemma \ref{lem:JoinOfVDSkelsIsVD} applies. We proceed
by induction on the number of elements in $P$.\smallskip{}

\noindent \emph{Base case: }If every maximal chain of $P$ is weakly
ascending then 
\[
\vert P\vert=\vert\mathbf{a}^{[\hat{0},\hat{1}]}\vert*\link_{\vert P\vert}\vert\mathbf{a}^{[\hat{0},\hat{1}]}\vert.
\]
 We also observe that every chain in $P$ has the same set of labels
up to multiplicity, and if one chain has two or more labels on $[a_{i}^{[\hat{0},\hat{1}]},a_{i+1}^{[\hat{0},\hat{1}]}]$
then every chain does. Thus, neither $\ell_{0}$ nor $\ell_{1}$ depend
on $\mathbf{c}$. Then $\mathbf{a}^{[\hat{0},\hat{1}]}$ is a chain,
hence $\vert\mathbf{a}^{[\hat{0},\hat{1}]}\vert$ is a $(\ell_{0}-2)$-dimensional
simplex and in particular is vertex-decomposable. On the other hand,
the $(\ell_{1}-1)$-skeleton of $\link_{\vert P\vert}\vert\mathbf{a}^{[\hat{0},\hat{1}]}\vert$
is vertex-decomposable via Lemma \ref{lem:JoinOfVDSkelsIsVD}, since
$\link_{\vert P\vert}\vert\mathbf{a}^{[\hat{0},\hat{1}]}\vert$ is
the join of (the order complexes of) $\ell_{1}$ intervals, each of
which has a vertex-decomposable $0$-skeleton. A second application
of Lemma \ref{lem:JoinOfVDSkelsIsVD} gives the result. \smallskip{}

\noindent \emph{Inductive step:} If $P$ has some maximal chain with
a descent, then choose $x$ as in Lemma \ref{lem:qELTech}. Then Lemma
\ref{lem:qELTech} Part (\ref{enu:qELTech-KeepsCovers}) shows that
$x$ is a shedding vertex. Moreover $(\skel_{r-2}\vert P\vert)\setminus x=\skel_{r-2}\vert P\setminus x\vert$
is vertex-decomposable by Lemma \ref{lem:qELTech} Part (\ref{enu:qELTech-Restricts})
and induction. It remains to show that the link is vertex-decomposable.

But we have that
\[
\link_{\left(\skel_{r-2}\vert P\vert\right)}x=\skel_{r-3}\left(\link_{\vert P\vert}x\right)=\skel_{r-3}\left(\vert[\hat{0},x]\vert*\vert[x,\hat{1}]\vert\right).
\]
By Lemma \ref{lem:qELTech} Part (\ref{enu:qELTech-TrivInterval})
all maximal chains in $[\hat{0},x]$ are ascending, hence (as previously
remarked) every such chain has exactly $i=\ell_{0}^{[\hat{0},x]}$
distinct labels, and $j=\ell_{1}^{[\hat{0},x]}$ repeated labels.
But by the hypothesis, every maximal chain $\mathbf{c}$ on $[x,\hat{1}]$
must have $\ell_{0}(\mathbf{c})+\ell_{1}(\mathbf{c})\geq r-i-j$.
By induction we get that $\skel_{i+j-2}\left(\vert[\hat{0},x]\vert\right)$
and $\skel_{r-i-j-2}\left(\vert[x,\hat{1}]\vert\right)$ are each
vertex-decomposable, and then Lemma \ref{lem:JoinOfVDSkelsIsVD} gives
the desired result that $\skel_{r-3}\left(\vert[\hat{0},x]\vert*\vert[x,\hat{1}]\vert\right)$
is vertex-\-decomposable.\end{proof}
\begin{cor}
In the situation of Theorem \ref{thm:QuasiELShellsSkeleton}, $\depth\vert P\vert\geq r-2$.\end{cor}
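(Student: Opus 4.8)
The plan is to deduce this purely formally from Theorem \ref{thm:QuasiELShellsSkeleton} together with the relationship between shellability of skeleta and depth recorded in Subsection on Cohen--Macaulayness: namely, if $m$ denotes the minimum dimension of a facet of a complex $\Delta$, then $\skel_{s}\Delta$ being shellable for some $s\leq m$ forces $\depth\Delta\geq s$. I would apply this criterion to $\Delta=\vert P\vert$ with skeleton index $s=r-2$.

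First I would recover the purity bookkeeping already established inside the proof of Theorem \ref{thm:QuasiELShellsSkeleton}: the hypothesis $\ell_{0}(\mathbf{c})+\ell_{1}(\mathbf{c})\geq r$ for every maximal chain $\mathbf{c}$ forces each maximal chain of $P$ to contain at least $r+1$ elements, counting $\hat{0}$ and $\hat{1}$. Hence each facet of $\vert P\vert$, being a maximal chain of $P\setminus\{\hat{0},\hat{1}\}$, has dimension at least $r-2$. In particular the minimum facet dimension $m$ satisfies $m\geq r-2$, so $r-2\leq m$, which is exactly the hypothesis needed to invoke the depth criterion at skeleton index $r-2$.

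Next I would invoke Theorem \ref{thm:QuasiELShellsSkeleton} itself: $\skel_{r-2}\vert P\vert$ is vertex-decomposable, and vertex-decomposability implies shellability. Feeding the two facts $r-2\leq m$ and ``$\skel_{r-2}\vert P\vert$ shellable'' into the criterion yields $\depth\vert P\vert\geq r-2$. Equivalently, one may argue directly through the definition of depth: the purity observation shows $\skel_{r-2}\vert P\vert$ is pure of dimension $r-2$, a pure shellable complex is Cohen--Macaulay, and $r-2\leq\dim\vert P\vert$, so $r-2$ is a valid value of the index maximized in the definition of $\depth$.

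There is essentially no obstacle here; the corollary is a formal consequence of the theorem once the chain-length count pins the minimum facet dimension at or above $r-2$. The only point requiring a moment's care is confirming $r-2\leq m$, so that the complex in question is genuinely a proper skeleton of $\vert P\vert$ and the cited depth bound applies—this is precisely what the lower bound $\ell_{0}(\mathbf{c})+\ell_{1}(\mathbf{c})\geq r$ on chain lengths supplies.
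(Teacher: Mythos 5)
Your proof is correct and is precisely the argument the paper intends (it states this corollary without proof as an immediate consequence): Theorem \ref{thm:QuasiELShellsSkeleton} gives vertex-decomposability, hence shellability, of $\skel_{r-2}\vert P\vert$, the chain-length count from the hypothesis shows the minimum facet dimension is at least $r-2$, and the depth criterion recorded in the background section finishes it. Nothing further is needed.
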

\begin{example}
In the lattice pictured in Figure \ref{fig:quasiELexample}, $\ell_{0}$
is 2 and $\ell_{1}$ is 1, so Theorem \ref{thm:QuasiELShellsSkeleton}
tells us that the 1-skeleton is shellable and the depth is at least
1. Since the interval $[c,\hat{1}]$ is disconnected, the depth is
in fact exactly 1.
\end{example}
To prove Conjecture \ref{con:HershConj}, it then suffices to show
that all maximal chains in a modular quasi-$EL$-labeling have enough
distinct labels. We begin with a computation:
\begin{lem}
\label{lem:LeftmodLabelingsGoodness}If $\lambda$ is the left-modular
labeling with respect to left-modular chain $\mathbf{m}=\{\hat{0}=m_{0}<m_{1}<\dots<m_{r}=\hat{1}\}$
and $\lambda(x\lessdot y)=i$, then $(m_{i-1}\vee x)\wedge m_{i}<(m_{i-1}\vee y)\wedge m_{i}$.\end{lem}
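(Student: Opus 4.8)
The plan is to unpack the two equivalent descriptions of the label and then run a short order-theoretic argument whose one genuinely nontrivial ingredient is the modular law. Writing $\lambda(x\lessdot y)=i$ unwinds, via the first formula for $\lambda$, to $m_{i-1}\wedge y\leq x$, and via the second formula to $y=x\vee(m_i\wedge y)$. I would record at the outset that the weak inequality $(m_{i-1}\vee x)\wedge m_i\leq(m_{i-1}\vee y)\wedge m_i$ holds trivially because $x\leq y$, so the entire substance of the lemma is to upgrade this to a strict inequality.

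The key preliminary step, and what I expect to be the main obstacle, is to move a join across a meet so as to relate the ``meet-outside'' expressions appearing in the statement to the ``meet-inside'' expressions appearing in the definition of $\lambda$. Since $x\lessdot y$ in particular gives $y\geq x$, left-modularity of $m_{i-1}$ (applied to the modular pair $(m_{i-1},x)$ with the role of $z$ played by $y$) yields $(x\vee m_{i-1})\wedge y=x\vee(m_{i-1}\wedge y)$, and the relation $m_{i-1}\wedge y\leq x$ collapses the right-hand side to $x$. Thus $(x\vee m_{i-1})\wedge y=x$. Identifying this as the identity to aim for, and noticing that $y\geq x$ is exactly the hypothesis that licenses the modular law, is the crux; the rest is bookkeeping.

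From here I would argue by contradiction, supposing $(m_{i-1}\vee x)\wedge m_i=(m_{i-1}\vee y)\wedge m_i$. Because $m_i\wedge y\leq m_i$ and $m_i\wedge y\leq y\leq m_{i-1}\vee y$, we get $m_i\wedge y\leq(m_{i-1}\vee y)\wedge m_i$, which under the supposed equality is at most $(m_{i-1}\vee x)\wedge m_i\leq m_{i-1}\vee x$. Feeding $m_i\wedge y\leq m_{i-1}\vee x$ together with $m_i\wedge y\leq y$ into the identity from the previous paragraph gives $m_i\wedge y\leq(m_{i-1}\vee x)\wedge y=x$. But then $y=x\vee(m_i\wedge y)\leq x$, contradicting $x<y$. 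Hence the two sides cannot be equal, and the desired strict inequality follows.
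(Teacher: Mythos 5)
Your proof is correct and takes essentially the same route as the paper's: both use left-modularity of $m_{i-1}$ to establish the key identity $(x\vee m_{i-1})\wedge y=x$, assume the two projections coincide, deduce that $m_i\wedge y\leq x$ (the paper phrases this as $x\wedge m_i=y\wedge m_i$, obtained by meeting both sides of the assumed equality with $y$), and derive a contradiction from $y=x\vee(m_i\wedge y)$. The difference is purely one of bookkeeping in how the assumed equality is exploited, not of mathematical substance.
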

\begin{proof}
We have that $x\vee m_{i-1}\wedge y=x$, hence that 
\[
\left((m_{i-1}\vee x)\wedge m_{i}\right)\wedge y=(x\vee m_{i-1})\wedge y\wedge m_{i}=x\wedge m_{i},
\]
while $\left((m_{i-1}\vee y)\wedge m_{i}\right)\wedge y=y\wedge m_{i}$
trivially. If the result is not true, then $x\wedge m_{i}=y\wedge m_{i}$,
hence 
\[
y=x\vee(m_{i}\wedge y)=x\vee(m_{i}\wedge x)=x,
\]
a contradiction.
\end{proof}
Lemma \ref{lem:LeftmodLabelingsGoodness} essentially says that the
``projection'' map $x\mapsto(m_{i-1}\vee x)\wedge m_{i}$ sends
a cover relation labeled by $i$ to distinct elements (though not
necessarily a cover relation) in the corresponding $[m_{i-1},m_{i}]$.

The following theorem then generalizes Conjecture \ref{con:HershConj}
in graded lattices:
\begin{thm}
\label{thm:GradedQLeftModularIsVD}If $\mathbf{m}=\{\hat{0}=m_{0}<m_{1}<\dots<m_{r}=\hat{1}\}$
is a left-modular chain in a graded lattice $L$, and $s$ of the
intervals $[m_{i-1},m_{i}]$ are nontrivial, then $L$ has a quasi-$EL$-labeling
assigning each maximal chain $r$ distinct labels and $s$ repeated
labels. In particular, $\skel_{r+s-2}\vert L\vert$ is vertex-decomposable.\end{thm}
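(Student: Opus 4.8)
The plan is to take $\lambda$ to be the left-modular labeling with respect to $\mathbf{m}$, which we already know (by the sketched lemma above) to be a quasi-$EL$-labeling with $\mathbf{a}^{[\hat{0},\hat{1}]}=\mathbf{m}$. Since a maximal chain $\mathbf{c}$ contributes $\ell_0(\mathbf{c})$ distinct labels and $\ell_1(\mathbf{c})$ repeated labels, the entire statement reduces to a single counting claim: on every maximal chain $\mathbf{c}$, each label $i\in\{1,\dots,r\}$ occurs with multiplicity exactly $\ell_i:=\rho(m_i)-\rho(m_{i-1})$, the length of the interval $[m_{i-1},m_i]$ in the graded lattice $L$. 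Granting this, every $\ell_i\geq 1$ (the chain $\mathbf{m}$ is strict), so all $r$ labels occur and $\ell_0(\mathbf{c})=r$; and a label is repeated precisely when $\ell_i\geq 2$, that is, exactly for the $s$ nontrivial intervals, so $\ell_1(\mathbf{c})=s$. The final clause then follows at once from Theorem \ref{thm:QuasiELShellsSkeleton}: since $\min_{\mathbf{c}}(\ell_0(\mathbf{c})+\ell_1(\mathbf{c}))=r+s$, the complex $\skel_{r+s-2}\vert L\vert$ is vertex-decomposable.

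To prove the counting claim I would track, for each $i$, the projection $\pi_i(x)=(m_{i-1}\vee x)\wedge m_i$, a monotone map of $L$ into $[m_{i-1},m_i]$ with $\pi_i(\hat{0})=m_{i-1}$ and $\pi_i(\hat{1})=m_i$. Two facts about a single cover relation $x\lessdot y$ of label $k$ drive the argument. First, Lemma \ref{lem:LeftmodLabelingsGoodness} gives $\pi_k(x)<\pi_k(y)$, so $\rho(\pi_k(y))-\rho(\pi_k(x))\geq 1$. Second, for $i>k$ the projection is unchanged: from $x\vee(m_k\wedge y)=y$ we get $y\leq x\vee m_k\leq x\vee m_{i-1}$, whence $m_{i-1}\vee x=m_{i-1}\vee y$ and therefore $\pi_i(x)=\pi_i(y)$. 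Finally, monotonicity of $\pi_i$ yields $\rho(\pi_i(y))\geq\rho(\pi_i(x))$ for every $i$ without exception.

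Now fix $i$ and telescope $\rho\circ\pi_i$ along a maximal chain $\mathbf{c}$: the cover-by-cover increments sum to $\rho(\pi_i(\hat{1}))-\rho(\pi_i(\hat{0}))=\ell_i$. Each increment is nonnegative; those coming from covers of label $<i$ vanish (the $i>k$ case above), while each of the $n_i(\mathbf{c})$ covers of label exactly $i$ contributes at least $1$. Hence $\ell_i\geq n_i(\mathbf{c})$ for every $i$. Summing over $i$ and noting $\sum_i n_i(\mathbf{c})=\rho(\hat{1})-\rho(\hat{0})=\sum_i\ell_i$, the inequalities $n_i(\mathbf{c})\leq\ell_i$ must all be equalities, giving $n_i(\mathbf{c})=\ell_i$ as required. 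The step I expect to be the real obstacle — and the one worth flagging — is resisting the natural but harder attempt to show that $\pi_i$ is also unchanged across a cover of label $k>i$; that equality genuinely needs two-sided modularity and can fail for a merely left-modular chain. The device above sidesteps it entirely: for the indices $i<k$ I use only the free inequality $\rho(\pi_i(y))\geq\rho(\pi_i(x))$, and let the global constraint $\sum_i n_i(\mathbf{c})=\sum_i\ell_i$ upgrade the one-sided bounds $n_i(\mathbf{c})\leq\ell_i$ into equalities.
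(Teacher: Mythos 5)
Your proposal is correct and takes essentially the same approach as the paper: both examine the left-modular quasi-$EL$-labeling, use the projections $x\mapsto(m_{i-1}\vee x)\wedge m_{i}$ together with Lemma \ref{lem:LeftmodLabelingsGoodness} to see that covers labeled $i$ project to strict increases in $[m_{i-1},m_{i}]$, and then invoke gradedness so that the counting forces each label $i$ to occur exactly $\length[m_{i-1},m_{i}]$ times, after which Theorem \ref{thm:QuasiELShellsSkeleton} finishes. Your explicit telescoping of $\rho\circ\pi_{i}$ (and your remark that one should not try to prove $\pi_{i}$ is constant across covers of larger label, which would need two-sided modularity) simply spells out the pigeonhole step the paper states tersely.
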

\begin{proof}
We examine the left-modular quasi-$EL$-labeling: It is obvious that
$(m_{i-1}\vee x)\wedge m_{i}\leq(m_{i}\vee y)\wedge m_{i}$ for all
$x\lessdot y$, with the inequality strict if $\lambda(x\lessdot y)=i$.
A maximal chain $\mathbf{c}$ thus determines a chain in $[m_{i-1},m_{i}]$
by projecting each $x$ to $(m_{i-1}\vee x)\wedge m_{i}$. Since (as
$L$ is graded) $\mathbf{c}$ has the same length as $\bigcup_{i=0}^{r-1}[m_{i-1},m_{i}]$,
each label $i$ must occur exactly $\length[m_{i-1},m_{i}]$ times.
The final assertion follows from Theorem \ref{thm:QuasiELShellsSkeleton}.\end{proof}
\begin{rem}
Left-modular elements seem to have an especially strong impact in
a graded lattice. Another example of this is the result of McNamara
and Thomas (\cite[Theorem 1]{McNamara/Thomas:2006}, see also \cite{Thomas:2005}
for a purely lattice-theoretic proof) that a lattice is supersolvable
(graded with a maximal chain consisting of left-modular elements)
if and only if the lattice admits a certain decomposition into distributive
sublattices. 
\end{rem}
We will need the following fact about (two-sided) modular elements:
\begin{lem}
\emph{\label{lem:ModgenDistributive}(essentially in \cite{Birkhoff:1967},
extended in \cite{Stanley:1972}, see also \cite{Thomas:2005})}\\
If $\mathbf{m}=\{\hat{0}=m_{0}<m_{1}<\dots<m_{r}=\hat{1}\}$ is a
(two-sided) modular chain, then the sublattice generated by $\mathbf{m}$
and any other chain $\mathbf{c}$ is distributive.
\end{lem}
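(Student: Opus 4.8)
The plan is to produce an explicit coordinate description of the generated sublattice $S := \langle \mathbf{m}\cup\mathbf{c}\rangle$, exhibiting it as a homomorphic image of a product of chains; distributivity then follows because it is inherited by homomorphic images. First I would record two preliminary reductions. Since $S$ is a sublattice, its meets and joins agree with those of $L$, so each $m_i$, being modular in $L$, remains modular in $S$, and I may argue entirely inside $S$. Writing $\mathbf{c}\colon c_0=\hat{0}<c_1<\dots<c_k<c_{k+1}=\hat{1}$ (the bounds lie in $S$ as $m_0,m_r$) and $\mathbf{m}=\{\hat{0}=m_0<\dots<m_r=\hat{1}\}$, the only consequence of modularity I will use is the mixed identity $(u\vee m_i)\wedge v=u\vee(m_i\wedge v)$ valid for $u\le v$, together with its right-handed companion $(m_i\vee y)\wedge z=m_i\vee(y\wedge z)$ valid for $z\ge m_i$.

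As a warm-up containing the whole idea, I would treat a single interior modular element $m$ with the chain $\mathbf{c}$. Here every generated element has the normal form $e_{a,b}=c_a\vee(m\wedge c_b)$; rewriting $e_{a,b}=(c_a\vee m)\wedge c_b$ for $a\le b$ via the identity, and using that the $c_j$ are pairwise comparable, a short calculation gives $e_{a,b}\wedge e_{a',b'}=e_{a\wedge a',\,b\wedge b'}$ and $e_{a,b}\vee e_{a',b'}=e_{a\vee a',\,b\vee b'}$. Thus $(a,b)\mapsto e_{a,b}$ is a surjective lattice homomorphism from a product of two chains, and the sublattice is distributive.

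For the general modular chain I would run the same strategy with more coordinates. The proposed normal form is
\[
\Psi(a,b_1,\dots,b_{r-1})\;=\;c_a\vee\bigvee_{i=1}^{r-1}\bigl(m_i\wedge c_{b_i}\bigr),
\]
indexed by $(a,b_1,\dots,b_{r-1})\in\{0,1,\dots,k+1\}^{r}$. Every generator has this form (take one nonzero coordinate), and closure under join is immediate and needs no modularity: since the $c_j$ form a chain, $(m_i\wedge c_{b_i})\vee(m_i\wedge c_{b_i'})=m_i\wedge c_{\max(b_i,b_i')}$, so $\Psi$ sends coordinatewise maximum to join. The crux, which I expect to be the main obstacle, is to show that $\Psi$ also sends coordinatewise minimum to meet, i.e.\ that the normal form is closed under meet with $\Psi(\min)$ as the answer. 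This is where all the modularity of the $m_i$ is spent at once, distributing a meet of two joins across the intervals $[m_{i-1},m_i]$ by repeated use of the mixed identity and the comparabilities among the $c_j$; I would organize it as an induction on $r$, splitting off the top term $m_{r-1}\wedge c_{b_{r-1}}$ and commuting it past the lower-order normal form. Granting this, $\Psi$ is a surjective lattice homomorphism from the distributive product of chains $\{0,\dots,k+1\}^{r}$ onto $S$, so $S$ is distributive.

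I would stress one structural warning that explains why the meet computation cannot be bypassed: a distributive sublattice together with one extra modular element need not be distributive (the diamond $M_3$ arises exactly this way, as a $2\times 2$ Boolean sublattice plus a third atom). Consequently the hypothesis on $\mathbf{c}$ cannot be weakened from ``chain'' to ``distributive,'' and the comparabilities among the $c_j$ must be invoked throughout the meet step; it is precisely these comparabilities that collapse the would-be diamonds and force meet to act coordinatewise.
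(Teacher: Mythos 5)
Your overall strategy---normal forms exhibiting the generated sublattice as an image of a (sub)lattice of a product of chains---is the classical route, and it is essentially what lies behind the sources the paper cites for this lemma (the paper itself gives no proof, only the citations to Birkhoff, Stanley, and Thomas). However, your write-up has a genuine gap, and in fact the precise statement you propose to ``grant'' is false as formulated. The map $\Psi$ defined on the \emph{full} product $\{0,\dots,k+1\}^{r}$ does not send coordinatewise minimum to meet, because elements of $S$ have many representing index vectors and the coordinatewise minimum depends on which representatives are chosen. A counterexample occurs already for $r=2$, inside a distributive lattice (so the lemma's conclusion is not at issue, only your identity): take $L=\{0,1,2,3\}\times\{0,1\}$, a product of two chains, so every element is two-sided modular; let $m=(0,1)$ and take the chain $c_0=(0,0)<c_1=(1,0)<c_2=(2,0)<c_3=(3,1)=\hat{1}$. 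Then
\[
\Psi(2,3)=c_2\vee(m\wedge c_3)=(2,1),\qquad\Psi(3,1)=c_3\vee(m\wedge c_1)=(3,1),
\]
so $\Psi(2,3)\wedge\Psi(3,1)=(2,1)$, whereas the coordinatewise minimum gives $\Psi(2,1)=c_2\vee(m\wedge c_1)=(2,0)\neq(2,1)$. The same example refutes the meet formula $e_{a,b}\wedge e_{a',b'}=e_{a\wedge a',\,b\wedge b'}$ in your warm-up: that formula, like the rewriting $e_{a,b}=(c_a\vee m)\wedge c_b$, is only valid after normalizing so that $a\le b$ and $a'\le b'$, and the normalized index pairs form a proper sublattice of the product, not the product itself.

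The repair is exactly where the content of the lemma lives, and it is the step you explicitly defer (``Granting this\dots''). One must restrict $\Psi$ to suitably normalized index vectors (for $r=2$ the condition $a\le b$ suffices; in general something like $b_1\ge b_2\ge\dots\ge b_{r-1}\ge a$, the staircase form implicit in Birkhoff's and Stanley's arguments), check that these vectors form a distributive lattice under the coordinatewise operations, show every element of $S$ has a normalized representative, and then carry out the meet computation for normalized representatives using two-sided modularity of each $m_i$. None of this is routine: as you yourself note, the join-closure you do verify uses no modularity at all, so all of the hypothesis is spent in precisely the verification that is missing. As it stands, the proposal is a plan whose central claim is both unproved and, in the form stated, incorrect.
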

Conjecture \ref{con:HershConj} is then a consequence of the following
theorem:
\begin{thm}
\label{thm:ModRskel}If $\mathbf{m}=\{\hat{0}=m_{0}<m_{1}<\dots<m_{r}=\hat{1}\}$
is a modular chain in any lattice $L$, then $L$ has a quasi-$EL$-labeling
assigning each maximal chain $r$ distinct labels. In particular,
$\skel_{r-2}\vert L\vert$ is vertex-decomposable.\end{thm}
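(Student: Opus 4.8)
The plan is to reduce everything to Theorem \ref{thm:QuasiELShellsSkeleton}. Since a two-sided modular chain is in particular left-modular, the left-modular labeling $\lambda$ with respect to $\mathbf{m}$ is defined, and it is a quasi-$EL$-labeling with $\mathbf{a}^{[\hat{0},\hat{1}]}=\mathbf{m}$ by the lemma sketched above (\cite[Lemma 2.4]{Woodroofe:2008}). Because the only possible labels are $1,\dots,r$, the claim that each maximal chain receives $r$ distinct labels is equivalent to the claim that on each maximal chain every label $i\in\{1,\dots,r\}$ occurs at least once. Granting this, $\ell_0(\mathbf{c})=r$ for every maximal chain $\mathbf{c}$, so $\min_{\mathbf{c}}(\ell_0(\mathbf{c})+\ell_1(\mathbf{c}))\geq r$, and Theorem \ref{thm:QuasiELShellsSkeleton} (together with the elementary fact that a skeleton of a vertex-decomposable complex is again vertex-decomposable) yields that $\skel_{r-2}\vert L\vert$ is vertex-decomposable.

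To see that every label occurs, I would fix a maximal chain $\mathbf{c}\colon \hat{0}=c_0\lessdot c_1\lessdot\dots\lessdot c_n=\hat{1}$ and an index $i$, and consider the projection $\pi_i(x)=(m_{i-1}\vee x)\wedge m_i$ into the interval $[m_{i-1},m_i]$. This map is order-preserving, with $\pi_i(c_0)=m_{i-1}$ and $\pi_i(c_n)=m_i$; since $m_{i-1}<m_i$, the weakly increasing sequence $\pi_i(c_0)\leq\pi_i(c_1)\leq\dots\leq\pi_i(c_n)$ must strictly increase across at least one cover relation. The heart of the argument is the claim that $\pi_i(c_{j-1})<\pi_i(c_j)$ holds \emph{precisely} when $\lambda(c_{j-1}\lessdot c_j)=i$; granting this, the cover relation at which $\pi_i$ jumps carries the label $i$, so $i$ appears on $\mathbf{c}$.

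For the claim, one direction is exactly Lemma \ref{lem:LeftmodLabelingsGoodness}. For the converse I would show that if $\lambda(c_{j-1}\lessdot c_j)=i'\neq i$ then $\pi_i$ is constant across that cover relation, splitting into the cases $i<i'$ and $i>i'$. The case $i>i'$ is purely order-theoretic: the defining property of the label gives $c_j\leq c_{j-1}\vee m_{i'}\leq c_{j-1}\vee m_{i-1}$, whence $m_{i-1}\vee c_{j-1}=m_{i-1}\vee c_j$ and therefore $\pi_i(c_{j-1})=\pi_i(c_j)$. The case $i<i'$ is where two-sided modularity is essential: here I would rewrite $\pi_i(x)=m_{i-1}\vee(x\wedge m_i)$, which is valid because $(x,m_{i-1})$ is a modular pair for the two-sided modular element $m_{i-1}$, and then use that the label being $i'>i$ forces $m_i\wedge c_j\leq c_{j-1}$, so that $c_{j-1}\wedge m_i=c_j\wedge m_i$ and the two projections agree. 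This is exactly the point at which left-modularity alone would not suffice; alternatively one may pass to the sublattice generated by $\mathbf{m}$ and $\mathbf{c}$, which is distributive by Lemma \ref{lem:ModgenDistributive}, and observe that there each $\pi_i$ is a genuine lattice homomorphism onto $[m_{i-1},m_i]$, making the jump-exactly-at-label-$i$ behaviour transparent.

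The main obstacle is this claim, and specifically the $i<i'$ case: one must invoke genuine two-sided modularity (through the modular-pair identity $(m_{i-1}\vee x)\wedge m_i=m_{i-1}\vee(x\wedge m_i)$, or equivalently through the distributivity of Lemma \ref{lem:ModgenDistributive}) rather than mere left-modularity. Once the projection is shown to change exactly at the label-$i$ cover relations, the crossing of the strictly increasing ``layer'' from $m_{i-1}$ up to $m_i$ forces label $i$ to appear on every maximal chain, and the counting and the reduction to Theorem \ref{thm:QuasiELShellsSkeleton} are then routine.
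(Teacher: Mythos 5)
Your argument is correct, but it takes a genuinely different route from the paper's. The paper's proof passes to the sublattice $L_{0}$ generated by $\mathbf{c}$ and $\mathbf{m}$, which is distributive (Lemma \ref{lem:ModgenDistributive}) and hence graded, notes that the modular labeling of $L_{0}$ agrees with $\lambda$ on $\mathbf{c}$, and then quotes the counting in Theorem \ref{thm:GradedQLeftModularIsVD} to conclude that $\mathbf{c}$ carries all $r$ labels; what you offer as an ``alternative'' at the end of your third paragraph is in fact essentially the paper's actual proof. Your main argument instead works directly in $L$: you show the projection $\pi_{i}(x)=(m_{i-1}\vee x)\wedge m_{i}$ is constant across every cover relation not labeled $i$ and strictly increases across those labeled $i$ (Lemma \ref{lem:LeftmodLabelingsGoodness}), so label $i$ must appear as $\pi_{i}$ climbs from $m_{i-1}$ to $m_{i}$. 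Your case analysis is sound: for a label $i'<i$ you use only the defining property of the label, while for $i'>i$ you correctly invoke the modular-pair identity $(m_{i-1}\vee x)\wedge m_{i}=m_{i-1}\vee(x\wedge m_{i})$, which is the ``right-hand'' half of two-sided modularity of $m_{i-1}$ and is precisely what left-modularity alone does not supply. Your route buys self-containedness --- it avoids Lemma \ref{lem:ModgenDistributive} and Theorem \ref{thm:GradedQLeftModularIsVD} entirely --- and it upgrades Lemma \ref{lem:LeftmodLabelingsGoodness} to an ``if and only if,'' pinpointing exactly where two-sided modularity enters; the paper's route buys brevity given the machinery already in place, plus slightly finer information (in the graded sublattice each label $i$ occurs exactly $\length[m_{i-1},m_{i}]$ times on $\mathbf{c}$). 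One small point in your favor: both you and the paper apply Theorem \ref{thm:QuasiELShellsSkeleton} with $r\leq\min_{\mathbf{c}}\left(\ell_{0}(\mathbf{c})+\ell_{1}(\mathbf{c})\right)$ rather than with equality, and you patch this explicitly (skeleta of vertex-decomposable complexes are vertex-decomposable, since shedding vertices survive passage to skeleta), whereas the paper leaves it implicit.
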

\begin{proof}
We examine the modular quasi-$EL$-labeling $\lambda$: Let $\mathbf{c}$
be a maximal chain. Then the sublattice $L_{0}$ generated by $\mathbf{c}$
and $\mathbf{m}$ is graded (since distributive), and moreover $\mathbf{m}$
is a modular chain in $L_{0}$. Thus $L_{0}$ has a modular labeling
$\lambda_{0}$ with respect to $\mathbf{m}$, and every chain in $L_{0}$
receives $r$ distinct labels from $\lambda_{0}$ by Theorem \ref{thm:GradedQLeftModularIsVD}.
Since $\lambda_{0}$ and $\lambda$ by definition give the same labels
to $\mathbf{c}$, every maximal chain $\mathbf{c}$ receives $r$
distinct labels, and we apply Theorem \ref{thm:QuasiELShellsSkeleton}.
\end{proof}
\medskip{}
In certain situations a quasi-$CL$-labeling will even give shellability
of the entire poset:
\begin{thm}
\label{thm:ShellableForQuasiEL} If $\lambda$ is a quasi-$CL$-labeling
on a bounded poset $P$ such that no maximal chain $\mathbf{c}$ has
more than two repeated labels in a row, then $\vert P\vert$ is vertex-decomposable.\end{thm}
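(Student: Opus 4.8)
The plan is to re-run the inductive vertex-decomposability argument of Theorem~\ref{thm:QuasiELShellsSkeleton}, but now peeling off the \emph{entire} order complex rather than a skeleton. I read the hypothesis as saying that in every maximal chain no label occurs on three consecutive cover relations, i.e.\ that every maximal run of equal labels has length at most two. I would induct on the number of elements of $P$, and the one new ingredient compared with Theorem~\ref{thm:QuasiELShellsSkeleton} is that this run condition forces each ``label block'' to be short enough that its order complex is already vertex-decomposable on the nose. Throughout I will use that a join of vertex-decomposable complexes is vertex-decomposable; this is the unrestricted analogue of Lemma~\ref{lem:JoinOfVDSkelsIsVD}, proved by the same shedding-vertex argument as Lemma~\ref{lem:JoinOfkDSkelsIskD} (a shedding vertex of $\Sigma$ remains a shedding vertex of $\Sigma*\Gamma$), and crucially it requires no purity hypothesis.

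For the base case, suppose every maximal chain of $P$ is weakly ascending, so that $\vert P\vert=\vert\mathbf{a}^{[\hat{0},\hat{1}]}\vert*\link_{\vert P\vert}\vert\mathbf{a}^{[\hat{0},\hat{1}]}\vert$ and, exactly as in Theorem~\ref{thm:QuasiELShellsSkeleton}, the link is the join of the order complexes of the blocks $[a_{i-1}^{[\hat{0},\hat{1}]},a_{i}^{[\hat{0},\hat{1}]}]$. On such a block every cover relation carries the same label $\alpha_i$, so in any maximal chain the block contributes a single run of $\alpha_i$'s; the run hypothesis therefore forces $\length[a_{i-1}^{[\hat{0},\hat{1}]},a_{i}^{[\hat{0},\hat{1}]}]\leq 2$. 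A length-one block gives a void order complex and a length-two block a $0$-dimensional one, both vertex-decomposable, and $\vert\mathbf{a}^{[\hat{0},\hat{1}]}\vert$ is a simplex. Hence $\vert P\vert$ is a join of vertex-decomposable complexes and so is vertex-decomposable.

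For the inductive step, if some maximal chain has a descent I would choose the vertex $x$ of Lemma~\ref{lem:qELTech}. Part~(\ref{enu:qELTech-KeepsCovers}) makes $x$ a shedding vertex of $\vert P\vert$, and Parts~(\ref{enu:qELTech-KeepsCovers}) and~(\ref{enu:qELTech-Restricts}) identify $\vert P\vert\setminus x$ with $\vert P\setminus x\vert$, on which $\lambda$ restricts to a quasi-$CL$-labeling. The maximal chains of $P\setminus x$ are exactly the maximal chains of $P$ avoiding $x$, so the run hypothesis is inherited and $\vert P\setminus x\vert$ is vertex-decomposable by induction. For the link, $\link_{\vert P\vert}x=\vert[\hat{0},x]\vert*\vert[x,\hat{1}]\vert$: by Part~(\ref{enu:qELTech-TrivInterval}) every chain of $[\hat{0},x]$ is ascending, so $\vert[\hat{0},x]\vert$ is vertex-decomposable by the base case applied to the restricted labeling, while $\vert[x,\hat{1}]\vert$ is vertex-decomposable by induction (the hypothesis again being inherited, since maximal chains of $[x,\hat{1}]$ extend to maximal chains of $P$). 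A join of vertex-decomposable complexes being vertex-decomposable, the link is vertex-decomposable, and therefore so is $\vert P\vert$.

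The main thing to get right is the bookkeeping around the run condition. First, the correct reading is \emph{consecutive}: because all labels on a single block agree, it is exactly the consecutive bound that translates into the length bound $\leq 2$ on each block, and this is what lets each block's order complex be vertex-decomposable without passing to a skeleton. Second, unlike in Theorem~\ref{thm:QuasiELShellsSkeleton}, the resulting complex need not be pure---for instance the pentagon $N_{5}$ admits such a labeling and has a non-pure (yet vertex-decomposable) order complex---so I would deliberately avoid Lemma~\ref{lem:JoinOfVDSkelsIsVD} in favor of the purity-free fact that joins of vertex-decomposable complexes are vertex-decomposable. Everything else is the routine verification that the three auxiliary posets $P\setminus x$, $[\hat{0},x]$, and $[x,\hat{1}]$ inherit both the quasi-$CL$-labeling and the run hypothesis, which holds because each of their maximal chains is a restriction of a maximal chain of $P$.
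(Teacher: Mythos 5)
Your proof is correct and, in outline, is the same as the paper's: the paper likewise re-runs the induction from Theorem \ref{thm:QuasiELShellsSkeleton}, noting that the run condition passes to induced subposets and to intervals, that Lemma \ref{lem:qELTech} still supplies the shedding vertex $x$ in the inductive step, and that in the base case the run condition forces each block $[a_{i-1}^{[\hat{0},\hat{1}]},a_{i}^{[\hat{0},\hat{1}]}]$ to have length at most $2$, so that $\link_{\vert P\vert}\vert\mathbf{a}^{[\hat{0},\hat{1}]}\vert$ is a join of $0$-dimensional complexes. Where you genuinely differ is in the join lemma, and there your instinct is right and is in fact needed. The paper's terse proof defers the inductive step back to Theorem \ref{thm:QuasiELShellsSkeleton}, where the link $\vert[\hat{0},x]\vert*\vert[x,\hat{1}]\vert$ was assembled via Lemma \ref{lem:JoinOfVDSkelsIsVD}; that lemma's purity hypothesis held there because the relevant skeleta were pure, but it can fail in the present setting --- as your $N_{5}$ example shows, $\vert P\vert$, and in particular the factor $\vert[x,\hat{1}]\vert$, need not be pure under the theorem's hypotheses. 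Your substitute --- the purity-free fact that a join of vertex-decomposable complexes is vertex-decomposable, since a shedding vertex $v$ of $\Sigma$ remains a shedding vertex of $\Sigma*\Gamma$, with $(\Sigma*\Gamma)\setminus v=(\Sigma\setminus v)*\Gamma$ and $\link_{\Sigma*\Gamma}v=\left(\link_{\Sigma}v\right)*\Gamma$ --- is true, is proved exactly by the shedding-vertex argument you cite, and is what the induction actually requires. So the two arguments use the same decomposition and the same induction; yours makes explicit (and repairs) the purity bookkeeping that the paper's proof glosses over. One cosmetic point: the order complex of a length-one block is the empty complex $\{\emptyset\}$ rather than the void complex, so joining with it is the identity; this does not affect your argument.
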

\begin{proof}
Examine the proof of Theorem \ref{thm:QuasiELShellsSkeleton}. Since
the repeated label condition of our hypothesis is closed under taking
induced subposets and intervals, and the inductive step of the proof
produces a shedding vertex, we need only show that the base case is
vertex-decomposable. Then in the base case (all chains weakly ascending),
we have $\vert P\vert=\vert\mathbf{a}^{[\hat{0},\hat{1}]}\vert*\link_{\vert P\vert}\vert\mathbf{a}^{[\hat{0},\hat{1}]}\vert$,
and the repeated label condition gives that $\link_{\vert P\vert}\vert\mathbf{a}^{[\hat{0},\hat{1}]}\vert$
is exactly the join of $0$-dimensional complexes, hence vertex-decomposable.\end{proof}
\begin{rem}
It is not difficult to show under the conditions of Theorem \ref{thm:ShellableForQuasiEL}
that $\lambda$ is actually a $CC$-labeling, in the sense of Kozlov
\cite{Kozlov:1997}. 
\end{rem}
If $L$ admits a left-modular maximal chain, then the associated left-modular
labeling is an $EL$-labeling \cite{Bjorner:1980,Liu:1999}. An immediate
consequence of Theorem \ref{thm:ShellableForQuasiEL} and Lemma \ref{lem:LeftmodLabelingsGoodness}
is the following surprising result.
\begin{cor}
\label{cor:EveryOtherModIsShellable} Let $L$ be a lattice admitting
a left-modular chain $\mathbf{m}=\{\hat{0}=m_{0}<m_{1}<\dots<m_{r}=\hat{1}\}$
such that each interval $[m_{i-1},m_{i}]$ has length at most 2. (I.e.,
$L$ has a maximum length chain where at least every other element
is left-modular.) Then $L$ is vertex-decomposable, hence shellable.\end{cor}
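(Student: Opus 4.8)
The plan is to deduce Corollary~\ref{cor:EveryOtherModIsShellable} directly from Theorem~\ref{thm:ShellableForQuasiEL} by verifying that the left-modular labeling $\lambda$ with respect to $\mathbf{m}$ satisfies the hypothesis of that theorem---namely, that no maximal chain has more than two repeated labels in a row. First I would invoke the lemma cited in the preceding sentence: since $\mathbf{m}$ is left-modular, $\lambda$ is a quasi-$EL$-labeling (hence a quasi-$CL$-labeling) with $\mathbf{a}^{[\hat 0,\hat 1]}=\mathbf{m}$, so Theorem~\ref{thm:ShellableForQuasiEL} is available once the combinatorial condition on labels is established.

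The key computation is to bound how many cover relations of a maximal chain $\mathbf{c}$ can carry a given label $i$. I would use Lemma~\ref{lem:LeftmodLabelingsGoodness}: if $\lambda(x\lessdot y)=i$, then the projection $x\mapsto (m_{i-1}\vee x)\wedge m_i$ strictly increases, sending $x$ and $y$ to distinct elements of the interval $[m_{i-1},m_i]$. Consequently, if a maximal chain $\mathbf{c}$ has cover relations $x_0\lessdot x_1\lessdot\dots\lessdot x_t$ all labeled $i$ (a maximal run of label $i$), then their projections form a strictly increasing chain $(m_{i-1}\vee x_0)\wedge m_i < (m_{i-1}\vee x_1)\wedge m_i < \dots < (m_{i-1}\vee x_t)\wedge m_i$ inside $[m_{i-1},m_i]$. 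This chain lies in an interval of length at most $2$ by hypothesis, so it has at most $3$ elements, forcing $t\le 2$; that is, at most two cover relations in a row can be labeled $i$.

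The mild subtlety---and the step I would treat most carefully---is passing from ``at most two cover relations of a fixed label $i$ in a run'' to ``at most two repeated labels in a row'' in the sense meant by Theorem~\ref{thm:ShellableForQuasiEL}. Here I would note that by the structure of the left-modular labeling (as recalled in the Sketch after Definition~\ref{def:QuasiCL}, where each cover relation on $[x\vee m_{i-1}\wedge y,\,x\vee m_i\wedge y]$ receives label $i$), the labels along any maximal chain are weakly increasing on ascending portions and the repeated labels cluster precisely into runs of a single value; thus a ``run of more than two repeated labels'' would be exactly a run of three or more cover relations sharing one label $i$, which the previous paragraph rules out. Hence no maximal chain has more than two repeated labels in a row.

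With the hypothesis of Theorem~\ref{thm:ShellableForQuasiEL} verified, that theorem immediately gives that $\vert L\vert$ is vertex-decomposable, and vertex-decomposability implies shellability as recorded in the Background section. I expect the main obstacle to be purely expository rather than mathematical: making the counting argument in the second paragraph precise enough that the bound on run-length transfers cleanly to the ``repeated labels in a row'' condition, since the interval $[m_{i-1},m_i]$ having length at most $2$ is exactly the numerical input that caps each run at two cover relations.
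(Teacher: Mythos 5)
Your proposal is correct and is essentially the paper's own argument: the paper derives this corollary as an immediate consequence of Theorem~\ref{thm:ShellableForQuasiEL} and Lemma~\ref{lem:LeftmodLabelingsGoodness}, exactly as you do, with the projection $x\mapsto(m_{i-1}\vee x)\wedge m_{i}$ capping any run of label $i$ at two cover relations since $[m_{i-1},m_{i}]$ has length at most $2$. Your third paragraph addresses what is really a non-issue (a run of consecutive repeated labels is automatically a run of a single value, by transitivity of equality), so the passage to the hypothesis of Theorem~\ref{thm:ShellableForQuasiEL} needs no extra structural facts about the labeling.
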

\begin{rem}
Example 2 of \cite{Hersh/Shareshian:2006} considers the intersection
lattice of a certain modification of the braid arrangement, and makes
the claim that it is not shellable. Since the given intersection lattice
has a maximal chain with all but a single element modular, Corollary
\ref{cor:EveryOtherModIsShellable} shows this claim to be incorrect.
The main property of interest in \cite{Hersh/Shareshian:2006} was
connectivity, and the connectivity calculation is correct. I am grateful
to Hugh Thomas for pointing out to me that this lattice is indeed
shellable.
\end{rem}

\section{\label{sec:DMtechniques}Discrete Morse matchings}

A $CL$-labeling for $P$ has previously been observed \cite{Babson/Hersh:2005}
to give rise to a discrete Morse function on $\vert P\vert$. In this
section we describe similar results for quasi-$CL$-labelings. The
critical cells correspond with weakly descending maximal chains, giving
an approach to computing the homotopy type that extends that of Björner
and Wachs for a $CL$-shellable poset.

\subsection{\label{sub:ReviewOfDiscreteMorse}Review of discrete Morse theory}

Discrete Morse theory was developed by Forman \cite{Forman:1998},
although the essential matching idea was earlier discovered by Brown
\cite{Brown:1992}. In discrete Morse theory, one constructs a partial
matching between faces of adjacent dimensions in a simplicial complex
$\Delta$. The matched faces can then be collapsed, leaving a $CW$-complex
$X$ homotopic to $\Delta$, and with cells in one-to-one correspondence
with the unmatched faces (or \emph{critical cells}) of $\Delta$. 

Babson and Hersh \cite{Babson/Hersh:2005} showed how to create a
discrete Morse matching from the lexicographic ordering induced by
an edge labeling on all maximal chains of $P$. The topological consequences
of a $CL$-labeling are recovered as a special case. We briefly summarize
this work, and in Section \ref{sub:DiscreteMorse-quasiCL} apply it
to quasi-$CL$-labelings. 

Let $\lambda$ be any edge labeling (or chain-edge labeling) of a
bounded poset $P$. Lexicographically order the maximal chains of
$P$ according to $\lambda$, breaking ties consistently, for example
by taking a linear extension $\epsilon$ of $P$ and extending $\lambda$
to $\lambda_{+}(x\lessdot y)=(\lambda(x\lessdot y),\epsilon(y))$.
A \emph{skipped interval} of a maximal chain $\mathbf{c}=\{\hat{0}=c_{0}\lessdot c_{1}\lessdot\dots\lessdot c_{\ell}=\hat{1}\}$
is a pair $c_{i}\leq c_{j}$ such that $\mathbf{c}\setminus[c_{i},c_{j}]$
is contained in some maximal chain $\mathbf{c}'$ with $\mathbf{c}'\lexlt\mathbf{c}$,
or (degenerately) the pair $c_{0}<c_{\ell}$ for the lexicographically
first maximal chain. A \emph{minimal skipped interval} is a skipped
interval which is minimal under inclusion. We notice that a subchain
$\mathbf{d}\subset\mathbf{c}$ fails to be contained in an earlier
$\mathbf{c}'$ if and only if $\mathbf{d}$ contains some $c_{k}$
with $c_{i}\leq c_{k}\leq c_{j}$ for each minimal skipped interval
$c_{i}\leq c_{j}$. Thus, the `new' faces in $\mathbf{c}$ are exactly
those that contain a vertex in each minimal skipped interval.

For each maximal chain $\mathbf{c}$, we ``shrink'' the minimal
skipped intervals of $\mathbf{c}$ by a certain sequence of truncating
and discarding operations to obtain a set of intervals $\mathcal{J}(\mathbf{c})$.
The details of how $\mathcal{J}(\mathbf{c})$ is obtained will not
be important to us, except that the intervals in $\mathcal{J}(\mathbf{c})$
do not overlap, that each interval in $\mathcal{J}(\mathbf{c})$ is
contained in a minimal skipped interval, and that if a minimal skipped
interval has length $0$ (i.e. $c_{i}=c_{j}$), then it is preserved
in passing to $\mathcal{J}(\mathbf{c})$. 

The main theorem of poset Morse theory is then:
\begin{thm}
\emph{\label{thm:PosetMorseMatching}(Babson and Hersh }\cite[Theorem 2.2]{Babson/Hersh:2005}\emph{)}\\
Let $P$ be a bounded poset, $\lambda$ be a chain-edge labeling,
and $\mathcal{J}(\mathbf{c})$ be as described above. Then there is
a Morse matching such that for any maximal chain $\mathbf{c}$ 
\begin{enumerate}
\item $\mathbf{c}$ contains at most one critical cell.
\item $\mathbf{c}$ contains a critical cell if and only if $\mathcal{J}(\mathbf{c})$
covers $\mathbf{c}\setminus\{\hat{0},\hat{1}\}$.
\item In the case where $\mathcal{J}(\mathbf{c})$ covers $\mathbf{c}\setminus\{\hat{0},\hat{1}\}$,
the unique critical cell in $\mathbf{c}$ has dimension $\#\mathcal{J}(\mathbf{c})-1$.
\end{enumerate}
\end{thm}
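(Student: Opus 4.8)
The plan is to build the promised matching by partitioning the faces of $\vert P\vert$ according to which maximal chain first ``introduces'' them, and then to match within each block using the disjoint interval system $\mathcal{J}(\mathbf{c})$. First I would observe that every interior face $\sigma$, being a chain in $P\setminus\{\hat0,\hat1\}$, is contained in a unique lexicographically least maximal chain $\mathbf{c}(\sigma)$, and that $\sigma$ is new to $\mathbf{c}(\sigma)$ in the sense discussed just before the statement. Hence the collections of new faces of the various maximal chains partition all faces of $\vert P\vert$, and it suffices to define a partial matching on the new faces of each fixed $\mathbf{c}$ separately, provided the union of these local matchings turns out to be acyclic.

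For a fixed $\mathbf{c}$, recall that a subchain is new to $\mathbf{c}$ precisely when it meets every minimal skipped interval. The rule I would use is a single-vertex toggle governed by the intervals of $\mathcal{J}(\mathbf{c})$: reading the intervals $J_1,\dots,J_k$ of $\mathcal{J}(\mathbf{c})$ along the chain, locate the first one in which the new face $\sigma$ may be altered by adding or deleting exactly one vertex while still meeting every minimal skipped interval, and toggle the canonical (say, bottommost) such vertex. The partner differs from $\sigma$ by a single vertex and, as discussed below, is again new to $\mathbf{c}$, so this is a well-defined partial involution on each block. Its fixed points are the new faces admitting no legal toggle, and a short analysis of the covering case shows there is at most one per chain: such a face occurs exactly when $\mathcal{J}(\mathbf{c})$ exhausts $\mathbf{c}\setminus\{\hat0,\hat1\}$, in which case it meets each of the $k=\#\mathcal{J}(\mathbf{c})$ intervals once and so has dimension $\#\mathcal{J}(\mathbf{c})-1$. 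This is exactly parts (1)--(3).

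Two points require care, and the second is the real obstacle. The first is verifying that the toggled face stays in the same block: adding or deleting a vertex inside some $J_i$, which is contained in a minimal skipped interval, preserves the property of meeting every minimal skipped interval, so the partner is again new to $\mathbf{c}$ and has the same introducing chain. This is precisely what forces one to shrink the possibly-overlapping minimal skipped intervals down to the disjoint system $\mathcal{J}(\mathbf{c})$, and here I would lean on the stated properties of that construction (disjointness, containment in a minimal skipped interval, preservation of length-$0$ intervals). The genuinely hard part is \emph{acyclicity} of the global matching, that is, the discrete Morse no-directed-cycle condition. Because the matching is defined chain-by-chain while gradient paths may drift between blocks through their down-steps, one must rule out any alternating sequence of toggles and covers that returns to its start. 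The plan is to track a monotone quantity along any gradient path --- morally the lexicographic position $\mathbf{c}(\sigma)$ of the current face together with the index of the active interval of $\mathcal{J}(\mathbf{c})$ --- and show it strictly decreases, so that cycles cannot close up. Making this monotonicity robust against the interaction of different chains' interval systems is exactly where the delicate design of $\mathcal{J}(\mathbf{c})$ earns its keep, and following \cite{Babson/Hersh:2005} I would expect this to be the most technical portion of the argument.
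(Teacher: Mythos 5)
First, a point of context: the paper does not prove this statement at all --- it is quoted as a black box from Babson and Hersh \cite[Theorem 2.2]{Babson/Hersh:2005} --- so your attempt can only be measured against the original source, whose overall architecture (partition faces by their lexicographically least containing maximal chain, match within each block via $\mathcal{J}(\mathbf{c})$, prove acyclicity by lexicographic monotonicity along gradient paths) you have reconstructed correctly in outline. However, your within-chain matching rule contains a genuine error: you only permit toggles of vertices lying \emph{inside} the intervals of $\mathcal{J}(\mathbf{c})$, and this contradicts parts (1) and (2). Concretely, let $P$ have proper part $\{u,u',v\}$ with $u<v$ and $u'<v$, so the two maximal chains are $\hat{0}<u<v<\hat{1}$ and $\hat{0}<u'<v<\hat{1}$, and label so that the second is lexicographically first; write $\mathbf{c}$ for the later chain. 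Then $\mathbf{c}\setminus\{u\}$ lies in the earlier chain while $\mathbf{c}\setminus\{v\}$ does not, so the unique minimal skipped interval of $\mathbf{c}$ is the length-$0$ interval $\{u\}$, and $\mathcal{J}(\mathbf{c})=\{\{u\}\}$ does not cover $\mathbf{c}\setminus\{\hat{0},\hat{1}\}=\{u,v\}$. The new faces of $\mathbf{c}$ are $\{u\}$ and $\{u,v\}$, and neither admits a legal toggle inside the interval $\{u\}$ (deleting $u$ destroys the property of meeting the minimal skipped interval, and there is nothing to add), so under your rule \emph{both} are fixed points: two critical cells in a chain that the theorem says must contribute none. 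The correct matching pairs $\{u\}$ with $\{u,v\}$ by toggling $v$, a vertex \emph{outside} $\bigcup\mathcal{J}(\mathbf{c})$; handling such free vertices is an essential part of the Babson--Hersh construction that your rule omits, and it is exactly what makes part (2) true.

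Two further problems. Even inside the intervals, your rule is not the involution you claim: if $\mathcal{J}(\mathbf{c})$ consists of the single interval $\{a<b\}$ (realized by two disjoint maximal chains), the rule sends $\{a\}\mapsto\{a,b\}$ but $\{a,b\}\mapsto\{b\}$, since $a$ is the bottommost legal toggle for $\{a,b\}$; so ``match $\sigma$ with its image'' is not symmetric, and repairing it by mutual pairing reopens parts (1)--(3), which then need a genuine argument rather than the ``short analysis'' you invoke. More fundamentally, acyclicity --- which you rightly identify as the crux, since without it there is no Morse matching and hence no conclusion at all --- is not proved but only planned for, with the hard work deferred to the cited paper. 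An argument that postpones its central difficulty is an outline, not a proof; to make it self-contained you would need to (i) redefine the matching so that it treats vertices outside $\bigcup\mathcal{J}(\mathbf{c})$ and is a genuine involution, and (ii) carry out the monotonicity argument for gradient paths crossing between blocks, which is precisely where the stated properties of $\mathcal{J}(\mathbf{c})$ (disjointness, containment in minimal skipped intervals, preservation of length-$0$ intervals) are actually put to use.
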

An easy lower bound for the dimension of the critical cell associated
with $\mathbf{c}$ is the number of minimal skipped intervals of length
0 for $\mathbf{c}$, as minimal skipped intervals of length 0 are
preserved in $\mathcal{J}(\mathbf{c})$. An improved lower bound is
the number of minimal skipped intervals of length 0, plus the number
of nonempty connected components left in the Hasse diagram for $\mathbf{c}$
after deleting the minimal skipped intervals of length 0.

For more details, we refer the reader to the original paper of Babson
and Hersh \cite{Babson/Hersh:2005}, to the helpful follow-up paper
\cite{Hersh:2005}, and to the highly readable overview in \cite{Sagan/Vatter:2006}.

\subsection{\label{sub:DiscreteMorse-quasiCL}Discrete Morse matchings for quasi-$CL$-labelings}

We consider the minimal skipped intervals in the lexicographic order
induced by a quasi-$CL$-labeling.
\begin{lem}
\label{lem:quasiELMorse}Let $P$ be a bounded poset with a quasi-$CL$-labeling
$\lambda$, and let $\mathbf{c}=\{\hat{0}=c_{0}\lessdot c_{1}\lessdot\dots\lessdot c_{\ell}=\hat{1}\}$
be a maximal chain of $P$. Then 
\begin{enumerate}
\item If $\mathbf{c}$ has a strict descent at $c_{k}$, then $\{c_{k}\}$
is a minimal skipped interval for $\mathbf{c}$.
\item If $\mathbf{c}$ has a strict ascent at $c_{k}$, then $c_{k}$ is
not contained in any minimal skipped interval for $\mathbf{c}$ unless
$\mathbf{c}$ is (the degenerate case of) the lexicographically first
maximal chain.
\end{enumerate}
\end{lem}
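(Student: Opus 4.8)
The plan is to work throughout with the reformulation that a pair $c_{i}\le c_{j}$ is a skipped interval of $\mathbf{c}$ if and only if $\mathbf{c}|_{[c_{i-1},c_{j+1}]}$ is \emph{not} the lexicographically least maximal chain on the rooted interval $[c_{i-1},c_{j+1}]$ (root $\mathbf{c}|_{[\hat{0},c_{i-1}]}$). This holds because any maximal chain $\mathbf{c}'$ containing $\mathbf{c}\setminus[c_{i},c_{j}]$ must agree with $\mathbf{c}$ on $[\hat{0},c_{i-1}]$ and on $[c_{j+1},\hat{1}]$, so whether $\mathbf{c}'\lexlt\mathbf{c}$ is decided entirely inside $[c_{i-1},c_{j+1}]$. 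I would also record two consequences of Definition~\ref{def:QuasiCL}: a weakly ascending maximal chain is precisely a refinement of the chain $\mathbf{a}^{\mathbf{r},[x,y]}$, and every maximal chain lexicographically below such a refinement is itself a refinement (since non-refinements lie strictly above all refinements).

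Part (1) is then immediate. If $\mathbf{c}$ has a strict descent at $c_{k}$, the cover-cover chain $c_{k-1}\lessdot c_{k}\lessdot c_{k+1}$ is non-ascending, hence not a refinement of $\mathbf{a}^{\mathbf{r},[c_{k-1},c_{k+1}]}$, so a maximal extension of that ascending chain is strictly lexicographically earlier. As the original is a cover-cover chain, no refinement passes through $c_{k}$, so the earlier chain avoids $c_{k}$ and witnesses that $\{c_{k}\}$ is skipped; being a length-$0$ interval it is automatically minimal.

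For part (2), assume $\mathbf{c}$ is not the lexicographically first chain (the excluded case), suppose the strict ascent $c_{k}$ lies in a minimal skipped interval $[c_{i},c_{j}]$, and derive a contradiction. When $i=j=k$, a skipped singleton would say $c_{k-1}\lessdot c_{k}\lessdot c_{k+1}$ is not lex-least; but this chain is strictly ascending, hence the unique refinement of $\mathbf{a}^{\mathbf{r},[c_{k-1},c_{k+1}]}$ (both pieces are rigid cover relations), so it is lex-least. When $j>i$, minimality forces the once-shortened subintervals $[c_{i},c_{j-1}]$ and $[c_{i+1},c_{j}]$ to be non-skipped, so $\mathbf{c}|_{[c_{i-1},c_{j}]}$ and $\mathbf{c}|_{[c_{i},c_{j+1}]}$ are each lex-least and therefore weakly ascending; concatenating their label inequalities shows $\mathbf{c}|_{[c_{i-1},c_{j+1}]}$ is weakly ascending, hence a refinement of $\mathbf{a}=\mathbf{a}^{\mathbf{r},[c_{i-1},c_{j+1}]}$. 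Being skipped, it is beaten by some $\mathbf{b}\lexlt\mathbf{c}|_{[c_{i-1},c_{j+1}]}$, which is then also a refinement of $\mathbf{a}$, so $\mathbf{b}$ passes through every corner of $\mathbf{a}$; since the label strictly increases across $c_{k}$, the vertex $c_{k}$ is a strict corner $a_{u}$ of $\mathbf{a}$, and $\mathbf{b}$ passes through it.

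Now let $p$ be the first position at which $\mathbf{b}$ and $\mathbf{c}|_{[c_{i-1},c_{j+1}]}$ differ (they share $c_{i-1},\dots,c_{p-1}$, with $\mathbf{b}$'s next label strictly smaller; one checks $i\le p\le j$). If $p>k$, then $\mathbf{b}|_{[c_{p-1},c_{j+1}]}$ beats $\mathbf{c}|_{[c_{p-1},c_{j+1}]}$, so $[c_{p},c_{j}]$ is a skipped proper subinterval. If $p<k$, then $\mathbf{b}$ still reaches $c_{k}$ and $\mathbf{b}|_{[c_{p-1},c_{k}]}$ beats $\mathbf{c}|_{[c_{p-1},c_{k}]}$, so $[c_{p},c_{k-1}]$ is a skipped proper subinterval; either case contradicts minimality. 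The one remaining possibility $p=k$ is the main obstacle: here $\mathbf{b}$ would first improve on $\mathbf{c}$ exactly at the cover out of $c_{k-1}$. But since $c_{k}=a_{u}$ and $c_{k-1}$ lies in the piece $[a_{u-1},a_{u}]$, the refinement $\mathbf{b}$ must leave $c_{k-1}$ by a cover \emph{inside} that piece, which by the constant-label property of Definition~\ref{def:QuasiCL} carries label $\alpha_{u}=\lambda(c_{k-1}\lessdot c_{k})$, contradicting that $\mathbf{b}$'s label there is strictly smaller. Thus $p=k$ is impossible and no minimal skipped interval can contain $c_{k}$. I expect ruling out $p=k$ to be the genuinely delicate step -- it is exactly where the quasi-structure (constant labels along the pieces of $\mathbf{a}$) is used -- while a secondary point needing care is non-gradedness, which lets pieces of $\mathbf{a}$ harbor chains of different lengths; the argument is arranged so that every contradiction is obtained at a single cover relation, so these length discrepancies never intervene.
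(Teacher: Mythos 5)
Your overall architecture matches the paper's: part (1) is the same argument, and in part (2) you establish via minimality that $\mathbf{c}$ is weakly ascending on $[c_{i-1},c_{j+1}]$, that the lexicographically earlier witness $\mathbf{b}$ must be a refinement of $\mathbf{a}=\mathbf{a}^{\mathbf{r},[c_{i-1},c_{j+1}]}$ and hence passes through the corner $c_{k}$, and then you split at $c_{k}$ to contradict minimality --- exactly the paper's plan. The gap is in the step you yourself flag as the crux, the case $p=k$. Your contradiction there rests on the parenthetical claim that at the first position where $\mathbf{b}$ and $\mathbf{c}$ differ \emph{as chains}, $\mathbf{b}$'s label is \emph{strictly} smaller. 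In the quasi-$CL$ setting this claim is not merely unjustified but always false: both $\mathbf{b}$ and $\mathbf{c}\vert_{[c_{i-1},c_{j+1}]}$ are refinements of $\mathbf{a}$, so they share every corner of $\mathbf{a}$, their first element disagreement occurs inside a single piece $[a_{u-1},a_{u}]$, and by condition (2) of Definition \ref{def:QuasiCL} the two covers leaving the last common element carry the \emph{same} label $\alpha_{u}$. Ties of this kind are endemic --- they are the whole point of ``quasi'' --- which is why the Babson--Hersh order of Section \ref{sub:ReviewOfDiscreteMorse} must break ties, e.g.\ by a linear extension $\epsilon$; lexicographic precedence of $\mathbf{b}$ then means only that the tie-breaker is smaller at that position, never that the raw label is. So in your case $p=k$ the ``contradiction'' between ``strictly smaller'' and ``equal to $\alpha_{u}$'' refutes a premise you never had, and the case is not actually ruled out by your argument.

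Fortunately the case is vacuous for an elementary reason whose ingredients you already have: if $p=k$, then $\mathbf{b}$ contains $c_{k-1}$ (shared prefix) and also $c_{k}$ (corner of $\mathbf{a}$); since $c_{k-1}\lessdot c_{k}$ is a cover relation of $P$, these two elements are consecutive in $\mathbf{b}$, so $\mathbf{b}$'s element at position $k$ is $c_{k}$ after all, contradicting that position $k$ is a disagreement. With that repair --- and with ``label'' read throughout as the tie-broken label $(\lambda,\epsilon)$, under which your cases $p<k$ and $p>k$ do survive, because the first tie-broken label difference occurs exactly at the first element difference --- your proof closes and is essentially the paper's. The paper sidesteps the issue entirely by never locating a first difference: since both chains pass through $c_{k}$, it compares the restrictions to $[c_{i-1},c_{k}]$ and to $[c_{k},c_{j+1}]$ and observes that at least one comparison must be strict, which produces the smaller skipped interval directly.
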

\begin{proof}
(1.) The chain $\mathbf{c}'$ obtained by replacing the descent with
an ascent is lexicographically earlier, and $\mathbf{c}'\cap\mathbf{c}=\mathbf{c}\setminus\{c_{k}\}$.

(2.) Suppose by contradiction that $\mathbf{c}$ has a strict ascent
at $c_{k}$ and that $c_{i}\leq c_{j}$ is a minimal skipped interval
with $i\leq k\leq j$. If $\mathbf{c}$ has any descent in $[c_{i-1},c_{j+1}]$,
then part (1) gives a smaller skipped interval, contradicting minimality
of the skipped interval. Thus $\mathbf{c}$ is (weakly) ascending
on the interval $[c_{i-1},c_{j+1}]$. Let $\mathbf{c}'$ be the lexicographically
minimal preceding chain with $\mathbf{c}\setminus[c_{i},c_{j}]\subseteq\mathbf{c}'$.
Then by definition of quasi-$CL$-labeling $\mathbf{c}'$ must also
be weakly ascending on $[c_{i-1},c_{j+1}]$, hence contain $c_{k}$. 

If $i=j=k$ then $c_{k-1}\lessdot c_{k}\lessdot c_{k+1}$ is strictly
increasing, and uniqueness of $\mathbf{a}^{[c_{k-1},c_{k+1}]}$ gives
a contradiction. Otherwise, $\mathbf{c}'$ restricted to $[c_{i-1},c_{k}]$
or $[c_{k},c_{j+1}]$ is $\lexleq$ the restriction of $\mathbf{c}$
to the same interval, and the inequality is strict for at least one
such restriction. It follows that either $\mathbf{c}\setminus[c_{i},c_{k-1}]$
or $\mathbf{c}\setminus[c_{k+1},c_{j}]$ is contained in a lexicographically
earlier chain, contradicting minimality of the skipped interval $c_{i}\leq c_{j}$.
\end{proof}
Lemma \ref{lem:quasiELMorse} characterizes the resulting Morse matching:
\begin{thm}
\label{thm:qCLhomotopyFromDescs}If $P$ is a bounded poset with a
quasi-$CL$-labeling $\lambda$, then $\vert P\vert$ has a poset
Morse matching such that a maximal chain $\mathbf{c}$ contributes
a critical cell only if $\mathbf{c}$ is weakly descending. If $\ell_{0}(\mathbf{c})$
is the number of distinct labels and $\ell_{1}(\mathbf{c})$ the number
of repeated labels of a maximal chain $\mathbf{c}$, , then the dimension
of the cell associated to a weakly descending chain $\mathbf{c}$
is at least $\ell_{0}(\mathbf{c})+\ell_{1}(\mathbf{c})-2$.\end{thm}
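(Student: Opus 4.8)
The plan is to read the conclusion directly off the Babson--Hersh matching of Theorem \ref{thm:PosetMorseMatching}, using Lemma \ref{lem:quasiELMorse} to control which vertices of a maximal chain can lie in a minimal skipped interval. Throughout I write $\mathbf{c}=\{\hat 0=c_0\lessdot\dots\lessdot c_\ell=\hat 1\}$ and recall the two facts I will repeatedly invoke: by Theorem \ref{thm:PosetMorseMatching}(2), the chain $\mathbf{c}$ carries a critical cell precisely when $\mathcal{J}(\mathbf{c})$ covers $\mathbf{c}\setminus\{\hat 0,\hat 1\}$, and in that case the cell has dimension $\#\mathcal{J}(\mathbf{c})-1$ by part (3). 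Since each interval of $\mathcal{J}(\mathbf{c})$ is contained in a minimal skipped interval, coverage forces every internal vertex $c_k$ to lie in some minimal skipped interval.

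For the first assertion I would argue the contrapositive. Suppose $\mathbf{c}$ carries a critical cell and is not the degenerate lexicographically first chain. If $\mathbf{c}$ had a strict ascent at some internal $c_k$, then Lemma \ref{lem:quasiELMorse}(2) would put $c_k$ in no minimal skipped interval, contradicting coverage. Hence $\mathbf{c}$ has no strict ascent, i.e.\ it is weakly descending. The one excluded chain, the lexicographically first, contributes only the basepoint $0$-cell through its degenerate skipped interval, so it does not disturb the correspondence in positive dimensions.

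For the dimension bound, fix a weakly descending $\mathbf{c}$ carrying a critical cell. As the label sequence is non-increasing, equal labels occur in consecutive runs, so the $\ell_0(\mathbf{c})$ distinct labels partition the cover relations into $\ell_0(\mathbf{c})$ blocks with a strict descent between consecutive blocks; this yields exactly $\ell_0(\mathbf{c})-1$ strict descents. By Lemma \ref{lem:quasiELMorse}(1) each such descent at $c_k$ is a length-$0$ minimal skipped interval $\{c_k\}$, and these survive the shrinking to $\mathcal{J}(\mathbf{c})$, contributing $\ell_0(\mathbf{c})-1$ singleton intervals. A repeated label is exactly a block of size at least $2$, of which there are $\ell_1(\mathbf{c})$; each such block has a nonempty interior of internal vertices lying strictly between consecutive descent vertices (or between an endpoint and the nearest descent vertex). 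Since the intervals of $\mathcal{J}(\mathbf{c})$ are pairwise disjoint and already contain the descent singletons, any interval meeting a block interior must lie entirely between two descent vertices, hence inside a single block interior; covering all $\ell_1(\mathbf{c})$ nonempty interiors therefore demands at least $\ell_1(\mathbf{c})$ further intervals. Summing, $\#\mathcal{J}(\mathbf{c})\ge(\ell_0(\mathbf{c})-1)+\ell_1(\mathbf{c})$, so the cell has dimension $\#\mathcal{J}(\mathbf{c})-1\ge\ell_0(\mathbf{c})+\ell_1(\mathbf{c})-2$.

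The main obstacle is the dimension count, not the first assertion, which is immediate from Lemma \ref{lem:quasiELMorse}(2). The delicate point is ensuring that the intervals covering the block interiors are genuinely distinct from, and do not merge across, the descent singletons; this is exactly where I lean on the non-overlapping property of $\mathcal{J}(\mathbf{c})$ together with the preservation of length-$0$ skipped intervals. I would also keep in mind that, in the quasi-$CL$ setting, a flat cover relation may itself become a length-$0$ minimal skipped interval through the tie-breaking order; such extra singletons only enlarge $\#\mathcal{J}(\mathbf{c})$ and leave the bound intact, since each nonempty block interior still forces at least one covering interval.
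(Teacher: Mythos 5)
Your proof is correct and takes essentially the same route as the paper's: Lemma \ref{lem:quasiELMorse} rules out strict ascents on chains carrying critical cells, each of the $\ell_{0}(\mathbf{c})-1$ strict descents of a weakly descending chain is a length-$0$ minimal skipped interval preserved in $\mathcal{J}(\mathbf{c})$, and the $\ell_{1}(\mathbf{c})$ repeated-label block interiors each force at least one additional disjoint interval, giving $\#\mathcal{J}(\mathbf{c})\geq\ell_{0}(\mathbf{c})+\ell_{1}(\mathbf{c})-1$ and hence the dimension bound via Theorem \ref{thm:PosetMorseMatching}. The only difference is expository: where the paper cites the ``improved lower bound'' discussion following Theorem \ref{thm:PosetMorseMatching}, you prove that bound directly, and you are more explicit than the paper about the degenerate lexicographically first chain contributing only the basepoint $0$-cell.
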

\begin{proof}
Lemma \ref{lem:quasiELMorse} part (2) tells us that if a chain has
any strict ascent, then $c_{k}$ is not covered by $\mathcal{J}(\mathbf{c})$.
Conversely, Lemma \ref{lem:quasiELMorse} part (1) tells us that each
strict descent is a minimal skipped interval of length 0. 

We observe that $\ell_{0}(\mathbf{c})-1$ is the number of strict
descents, and $\ell_{1}(\mathbf{c})$ is the number of nonempty components
remaining in the Hasse diagram of $\mathbf{c}$ after deleting the
strict descents. The dimension bound then follows from Theorem \ref{thm:PosetMorseMatching}
and the discussion following its statement.
\end{proof}
Theorem \ref{thm:qCLhomotopyFromDescs} is an extension of \cite[Theorem 5.9]{Bjorner/Wachs:1996}
to quasi-$CL$-labelings, following the approach of \cite[Proposition 4.1]{Babson/Hersh:2005}.
\begin{cor}
\label{cor:AlternatingConnectivity}Let $P$ be a poset with a quasi-$CL$-labeling
$\lambda$, and for a maximal chain $\mathbf{c}$ let $\ell_{0}(\mathbf{c})$
and $\ell_{1}(\mathbf{c})$ be as in Theorem \ref{thm:qCLhomotopyFromDescs}.
Then the connectivity of $P$ is at least 
\[
\min\left\{ \ell_{0}(\mathbf{c})+\ell_{1}(\mathbf{c})-3\,:\,\mathbf{c}\mbox{ a weakly descending maximal chain}\right\} .
\]

\end{cor}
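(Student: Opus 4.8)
The plan is to read the connectivity bound directly off the discrete Morse matching produced by Theorem~\ref{thm:qCLhomotopyFromDescs}, invoking the standard principle that a complex built from a single vertex by attaching cells only in high dimensions is highly connected. Throughout write $c=\min\{\ell_0(\mathbf{c})+\ell_1(\mathbf{c})-3\}$, the minimum taken over weakly descending maximal chains $\mathbf{c}$.

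First I would apply Theorem~\ref{thm:qCLhomotopyFromDescs} to obtain a poset Morse matching on $\vert P\vert$ whose critical cells all arise from weakly descending maximal chains, together with the single minimal (base) cell coming from the lexicographically first chain. By the basic theorem of discrete Morse theory, $\vert P\vert$ is then homotopy equivalent to a CW-complex $X$ with one cell for each critical cell, of matching dimension. The same theorem bounds the dimension of the cell attached to a weakly descending chain $\mathbf{c}$ below by $\ell_0(\mathbf{c})+\ell_1(\mathbf{c})-2$, which is at least $c+1$ for every weakly descending chain by the definition of $c$ as a minimum. Thus, apart from the base cell, every cell of $X$ has dimension at least $c+1$.

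Next I would convert this cell structure into the connectivity statement. When $c\geq 0$ the dimension bound $\ell_0+\ell_1-2\geq c+1\geq 1$ shows that no weakly descending chain contributes a $0$-cell, so the only vertex of $X$ is the base cell and $X$ has no cells in dimensions $1,\dots,c$; in particular its $c$-skeleton is a single point. By the cellular approximation theorem the inclusion of the $(c+1)$-skeleton induces an isomorphism on $\pi_i$ for $i\leq c$, and that skeleton is a wedge of $(c+1)$-spheres, which is $c$-connected; hence $\pi_i(\vert P\vert)=0$ for $i\leq c$, as claimed. For $c\leq -1$ the assertion merely says that $\vert P\vert$ is nonempty, which is automatic.

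The argument is short because Theorem~\ref{thm:qCLhomotopyFromDescs} does the real work; the only point demanding care is the bottom of $X$. I would verify explicitly that the lexicographically first chain supplies exactly one minimal cell and that the dimension bound rules out any further low-dimensional cells, so that $X$ is genuinely connected with a single vertex. Once that is pinned down, passing from the Morse data to the connectivity bound is routine.
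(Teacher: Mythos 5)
Your proposal is correct and is exactly the argument the paper intends: Corollary \ref{cor:AlternatingConnectivity} is stated without proof as an immediate consequence of Theorem \ref{thm:qCLhomotopyFromDescs}, via the standard fact that a CW-complex with one $0$-cell and all remaining cells in dimensions $\geq c+1$ is $c$-connected. Your explicit handling of the degenerate base cell from the lexicographically first chain and of the case $c\leq -1$ fills in the routine details the paper leaves implicit.
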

We notice that Corollary \ref{cor:AlternatingConnectivity} requires
examination of only weakly descending chains of a quasi-$CL$-labeling.
This is in contrast to Theorem \ref{thm:ShellableForQuasiEL}, which
requires examining the label sets of all chains, although of course
Theorem \ref{thm:ShellableForQuasiEL} has the stronger consequence
of shellability.

We further remark that the approach of Theorem \ref{thm:qCLhomotopyFromDescs}
and Corollary \ref{cor:AlternatingConnectivity} reduces understanding
the homotopy type of a poset with a quasi-$CL$-labeling to understanding
the intervals between descents on the weakly descending chains. 

\smallskip{}

We now apply Corollary \ref{cor:AlternatingConnectivity} to left-modular
labelings. By a \emph{chain of complements} to a left-modular chain
$\mathbf{m}$, we mean a chain consisting of a complement to every
element of $\mathbf{m}$. We notice it is an immediate consequence
of the definition that no left-modular element may have two comparable
complements, so that a chain of complements cannot be longer than
$\mathbf{m}$. On the other hand, two comparable left-modular elements
may have the same complement, so a chain of complements may be shorter
than $\mathbf{m}$. In the two-sided modular case, Lemma \ref{lem:ModgenDistributive}
gives that any chain of complements has exactly the same length as
$\mathbf{m}$.

The following lemma then extends \cite[Lemma 1.2]{Thevenaz:1985}.
\begin{lem}
\label{lem:DescChainsModularLabeling}If $\lambda$ is the left-modular
quasi-$EL$-labeling of $L$ with respect to $\mathbf{m}$, then a
maximal chain $\mathbf{c}$ is weakly descending if and only if $\mathbf{c}$
is a refinement of a chain of complements to $\mathbf{m}$.\end{lem}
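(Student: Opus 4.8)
The plan is to prove both implications by reading off the labels of cover relations directly from meets and joins with the $m_i$. The key reformulation I would use is that, by the definition of $\lambda$, the set $\{i : c'\vee(m_{i-1}\wedge c'')=c'\}$ is downward closed with maximum $\lambda(c'\lessdot c'')$, and the set $\{i : c'\vee(m_i\wedge c'')=c''\}$ is upward closed with minimum $\lambda(c'\lessdot c'')$. Consequently $\lambda(c'\lessdot c'')\ge i$ iff $c'\vee(m_{i-1}\wedge c'')=c'$, and $\lambda(c'\lessdot c'')\le i$ iff $c'\vee(m_i\wedge c'')=c''$; so exhibiting a single index $i$ for which both identities hold pins down the label exactly.

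For the forward direction, suppose $\mathbf{c}$ is weakly descending. For $0\le i\le r$ let $x_i$ be the top of the bottom segment of $\mathbf{c}$ all of whose cover relations carry a label strictly greater than $i$; since labels weakly decrease from $\hat{0}$ to $\hat{1}$ this is well defined, $x_i\in\mathbf{c}$, and $\hat{0}=x_r\le x_{r-1}\le\cdots\le x_0=\hat{1}$. I would show each $x_i$ is a complement of $m_i$ by two inductions along $\mathbf{c}$. Walking up from $\hat{0}$ to $x_i$ across cover relations $c'\lessdot c''$ of label $j>i$, the identity $c'\vee(m_{j-1}\wedge c'')=c'$ gives $m_i\wedge c''\le m_{j-1}\wedge c''\le c'$, hence $m_i\wedge c''\le m_i\wedge c'=\hat{0}$; this yields $x_i\wedge m_i=\hat{0}$. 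Walking down from $\hat{1}$ to $x_i$ across cover relations of label $j\le i$, the identity $c'\vee(m_j\wedge c'')=c''$ together with $m_j\wedge c''\le m_i$ gives $c'\vee m_i\ge c''$, hence $c'\vee m_i\ge c''\vee m_i=\hat{1}$; this yields $x_i\vee m_i=\hat{1}$. Thus $\{x_0\ge\cdots\ge x_r\}$ is a chain of complements to $\mathbf{m}$ that is refined by $\mathbf{c}$.

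For the converse, start from a chain of complements, indexed so that $x_i$ is the chosen complement of $m_i$. I would first check that the $x_i$ decrease: if $x_i\le x_{i'}$ with $i\le i'$, then $x_{i'}\wedge m_i\le x_{i'}\wedge m_{i'}=\hat{0}$ and (since $x_{i'}\ge x_i$) $x_{i'}\vee m_i=\hat{1}$, so $x_{i'}$ is also a complement of $m_i$ comparable to $x_i$; as a left-modular element has no two comparable complements, $x_i=x_{i'}$. Hence $\hat{0}=x_r\le\cdots\le x_0=\hat{1}$, and every cover relation of $\mathbf{c}$ lies in exactly one nontrivial interval $[x_i,x_{i-1}]$. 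On such a relation $c'\lessdot c''$ I would show the label is exactly $i$. The meet side is immediate: $c''\le x_{i-1}$ forces $m_{i-1}\wedge c''\le m_{i-1}\wedge x_{i-1}=\hat{0}$, so $c'\vee(m_{i-1}\wedge c'')=c'$ and $\lambda\ge i$. For the join side, left-modularity of $m_i$ applied to the pair $(m_i,x_i)$ with $x_i\le c''$ gives $(x_i\vee m_i)\wedge c''=x_i\vee(m_i\wedge c'')$, i.e.\ $c''=x_i\vee(m_i\wedge c'')$; if $m_i\wedge c''\le c'$ then the right-hand side would be $\le c'$, contradicting $c'<c''$, so $c'\vee(m_i\wedge c'')=c''$ and $\lambda\le i$. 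Therefore each block carries its own index as label, the labels read $r,\dots,1$ from bottom to top, and $\mathbf{c}$ is weakly descending.

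The main obstacle is precisely the join computation in the converse, namely upgrading $\lambda\ge i$ to $\lambda=i$. This is the one place where the argument genuinely uses left-modularity (rather than only the complement relations), through the modular-pair identity for $(m_i,x_i)$ and the resulting contradiction that rules out $m_i\wedge c''\le c'$. A secondary point to handle carefully is the bookkeeping that the chosen complements must decrease as the $m_i$ increase, which I would settle as above using the fact that a left-modular element admits no two comparable complements.
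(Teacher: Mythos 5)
Your proof is correct and takes essentially the same approach as the paper: your forward direction (taking $x_i$ to be the top of the initial segment whose labels exceed $i$, then showing $x_i\wedge m_i=\hat{0}$ by walking up and $x_i\vee m_i=\hat{1}$ by walking down) is precisely the paper's argument for the nontrivial implication. Your converse fills in, correctly, the meet/join computation and the left-modular identity at the pair $(m_i,x_i)$ that the paper dismisses as ``a straightforward computation,'' including the bookkeeping that comparable complements of a left-modular element coincide.
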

\begin{proof}
The ``if'' direction is a straightforward computation: if $y_{\ell}$
and $y_{\ell-1}$ are complements to $m_{\ell}$ and $m_{\ell-1}$
with $y_{\ell}<y_{\ell-1}$, then every cover relation on $[y_{\ell},y_{\ell-1}]$
receives label $\ell$. 

For the other direction, we let $\mathbf{c}=\{\hat{0}=c_{0}<c_{1}<\dots<c_{k}=\hat{1}\}$
be a weakly descending chain, with $j$ the smallest index such that
$\lambda(c_{j}\lessdot c_{j+1})\leq\ell$. We notice that if $\lambda(c_{i}\lessdot c_{i+1})>\ell$,
then $m_{\ell}\wedge c_{i+1}\leq c_{i}$, hence $m_{\ell}\wedge c_{i+1}=m_{\ell}\wedge c_{i}$.
Conversely, if $\lambda(c_{i}\lessdot c_{i+1})\leq\ell$, then $m_{\ell}\vee c_{i}\geq c_{i+1}$,
hence $m_{\ell}\vee c_{i}=m_{\ell}\vee c_{i+1}$. Applying these observations
inductively, we see that $c_{j}\wedge m_{\ell}=\hat{0}\wedge m_{\ell}=\hat{0}$,
while $c_{j}\vee m_{\ell}=\hat{1}\vee m_{\ell}=\hat{1}$, so that
$c_{j}$ is a complement to $m_{\ell}$. Hence $\mathbf{c}$ contains
a complement to each $m_{\ell}\in\mathbf{m}$.\end{proof}
\begin{cor}
\label{cor:ChainOfComplementsConnectivity}If $\mathbf{m}$ is a left-modular
chain in a lattice $L$, then $\vert L\vert$ is $(s+t-3)$-connected,
where $s$ and $t$ are the smallest number of distinct and repeated
labels (respectively) in a maximal refinement of a chain of complements
to $\mathbf{m}$.
\end{cor}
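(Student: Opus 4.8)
The plan is to read this off directly from Corollary \ref{cor:AlternatingConnectivity}, applied to the left-modular quasi-$EL$-labeling $\lambda$ of $L$ with respect to $\mathbf{m}$. First I would observe that $\lambda$ is a genuine quasi-$CL$-labeling: it is a quasi-$EL$-labeling by the lemma of \cite{Woodroofe:2008} recalled above, and every edge labeling is in particular a chain-edge labeling, so the hypotheses of Corollary \ref{cor:AlternatingConnectivity} are satisfied by $(L,\lambda)$. That corollary then gives at once that the connectivity of $\vert L\vert$ is at least the minimum of $\ell_0(\mathbf{c})+\ell_1(\mathbf{c})-3$ taken over all \emph{weakly descending} maximal chains $\mathbf{c}$.

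The second ingredient is the identification of the weakly descending chains. By Lemma \ref{lem:DescChainsModularLabeling}, a maximal chain $\mathbf{c}$ is weakly descending for $\lambda$ if and only if $\mathbf{c}$ is a maximal refinement of a chain of complements to $\mathbf{m}$. Hence the minimum appearing in the previous paragraph may be taken over exactly the maximal refinements of chains of complements, which is precisely the family of chains over which $s$ and $t$ are defined.

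It then remains only to compare $\min_{\mathbf{c}}\bigl(\ell_0(\mathbf{c})+\ell_1(\mathbf{c})\bigr)$ with $s+t$. Since $s=\min_{\mathbf{c}}\ell_0(\mathbf{c})$ and $t=\min_{\mathbf{c}}\ell_1(\mathbf{c})$ are taken (separately) over this same family of chains, every such $\mathbf{c}$ satisfies $\ell_0(\mathbf{c})+\ell_1(\mathbf{c})\geq s+t$, so the minimum of the left-hand side is at least $s+t$. Combining with the first paragraph yields connectivity at least $s+t-3$, as claimed.

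I do not expect a substantive obstacle: the statement is essentially a translation of Corollary \ref{cor:AlternatingConnectivity} through the dictionary supplied by Lemma \ref{lem:DescChainsModularLabeling}. The only points that merit care are the (routine) check that the edge labeling $\lambda$ qualifies as a quasi-$CL$-labeling, and the mild subtlety that $s$ and $t$ are \emph{separate} minima, possibly realized by different chains; but because we only seek a lower bound for the connectivity, the inequality $s+t\leq \min_{\mathbf{c}}\bigl(\ell_0(\mathbf{c})+\ell_1(\mathbf{c})\bigr)$ — rather than an equality witnessed by a single chain — is all that is required.
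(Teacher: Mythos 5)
Your proposal is correct and matches the paper's intended argument exactly: the corollary is stated there without separate proof precisely because it follows immediately from Corollary \ref{cor:AlternatingConnectivity} applied to the left-modular quasi-$EL$-labeling, with Lemma \ref{lem:DescChainsModularLabeling} identifying the weakly descending chains as the maximal refinements of chains of complements. Your extra remark that $s$ and $t$ are separate minima, so only the inequality $\ell_0(\mathbf{c})+\ell_1(\mathbf{c})\geq s+t$ is needed, is a correct and careful reading of the statement.
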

We began the paper by recalling the result of Hersh and Shareshian
that if a lattice $L$ admits a modular chain $\mathbf{m}$ of length
$r$, then $\vert L\vert$ is $(r-3)$-connected \cite[Theorem 1]{Hersh/Shareshian:2006}.
We observed in Theorem \ref{thm:ModRskel} that every chain in such
a lattice receives $r$ distinct labels from the modular labeling.
The shellability consequence of Theorem \ref{thm:ModRskel} gives
one new generalization of \cite[Theorem 1]{Hersh/Shareshian:2006};
Corollary \ref{cor:ChainOfComplementsConnectivity} gives another.

\section{\label{sec:SubgroupLatApplications}Applications to the subgroup
lattice}

For a group $G$, let $L(G)$ denote the \emph{subgroup lattice} of
$G$, that is, the lattice consisting of all subgroups of $G$ ordered
by inclusion. The meet and join operations in this lattice are $H\vee K=\langle H,K\rangle$,
and $H\wedge K=H\cap K$. The Dedekind identity from group theory
gives us that any normal subgroup is modular in $L(G)$. Series of
normal subgroups form an important class of examples of modular chains. 

The topology of $\vert L(G)\vert$ has been especially studied in
the solvable case, where one has long chains of modular elements.
Thévenaz \cite{Thevenaz:1985} showed:
\begin{thm}
\label{thm:Thevenaz}\emph{(Thévenaz \cite[Theorem 1.4]{Thevenaz:1985})}
If $G$ is solvable with a chief series of length $r$ then $L(G)$
has the homotopy type of a bouquet of $(r-2)$-dimensional spheres,
where the spheres are in bijective correspondence with the chains
of complements to the chief series. 
\end{thm}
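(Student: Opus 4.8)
The chief series is, by the Dedekind identity, a chain of (two-sided) modular elements $\mathbf{m}=\{\hat{0}=m_{0}<m_{1}<\dots<m_{r}=\hat{1}\}$ of length $r$ in $L(G)$, where $m_\ell/m_{\ell-1}$ is the $\ell$-th chief factor. The plan is to read the homotopy type off the modular labeling $\lambda$ of $L(G)$ with respect to $\mathbf{m}$ via the Morse matching of Section \ref{sec:DMtechniques}. By Theorem \ref{thm:ModRskel}, $\lambda$ is a quasi-$EL$-labeling assigning every maximal chain exactly $r$ distinct labels; in particular $\skel_{r-2}\vert L(G)\vert$ is vertex-decomposable and $\vert L(G)\vert$ is $(r-3)$-connected, which already forces any homotopy spheres below dimension $r-2$ to vanish.

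I would then apply Theorem \ref{thm:qCLhomotopyFromDescs}: $\vert L(G)\vert$ is homotopy equivalent to a $CW$-complex whose cells, apart from the single $0$-cell coming from the (degenerate, lexicographically first) ascending chain, are carried by the weakly descending maximal chains of $\lambda$. By Lemma \ref{lem:DescChainsModularLabeling} these are precisely the refinements of chains of complements to $\mathbf{m}$; and since $\mathbf{m}$ is two-sided modular, Lemma \ref{lem:ModgenDistributive} ensures that every chain of complements $\hat{1}=y_{0}>y_{1}>\dots>y_{r}=\hat{0}$ has length exactly $r$, where $y_\ell$ is a complement to $m_\ell$.

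The crux, and where solvability must enter essentially, is to show that each such chain of complements is \emph{already} a maximal chain of $L(G)$, i.e.\ that every interval $[y_{\ell},y_{\ell-1}]$ has length $1$. Here I would argue group-theoretically. Since $y_{\ell-1}$ complements $m_{\ell-1}$, we have $y_{\ell-1}\cong G/m_{\ell-1}$, and under this isomorphism the chief factor $m_{\ell}/m_{\ell-1}$ becomes a minimal normal subgroup $M\normalin y_{\ell-1}$ while $y_{\ell}$ becomes a complement to $M$, so $y_{\ell-1}=My_{\ell}$ with $M\cap y_{\ell}=\hat{0}$. Because $M$ is an \emph{abelian} chief factor (here solvability is used), the conjugation action of $G$ on $M$ factors through $G/m_{\ell}\cong y_{\ell}$, so $M$ is irreducible already as a $y_{\ell}$-module; hence the only $y_{\ell}$-invariant subgroups of $M$ are $\hat{0}$ and $M$. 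Any $V$ with $y_{\ell}\leq V\leq y_{\ell-1}$ satisfies $V=(V\cap M)y_{\ell}$ by Dedekind, with $V\cap M$ being $y_{\ell}$-invariant, so $V\in\{y_{\ell},y_{\ell-1}\}$. Thus $y_{\ell}\lessdot y_{\ell-1}$, every interval has length $1$, and the label sequence along a chain of complements is strictly decreasing $r,r-1,\dots,1$.

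Finally I would assemble the count. With strictly decreasing labels, every interior vertex of a chain of complements is a strict descent, hence by Lemma \ref{lem:quasiELMorse} a minimal skipped interval of length $0$; there are $r-1$ of these, so by Theorem \ref{thm:PosetMorseMatching} the associated critical cell has dimension exactly $(r-1)-1=r-2$. Since the only critical cells are the single basepoint together with one $(r-2)$-cell per chain of complements, the Morse complex is a wedge of $(r-2)$-spheres indexed bijectively by the chains of complements, as claimed. The main obstacle is precisely the length-$1$ step: for a non-solvable normal (merely modular) chain the chief factors need not be abelian, the irreducibility argument breaks, the intervals $[y_{\ell},y_{\ell-1}]$ can be long, and the descending chains then carry higher-dimensional critical cells, so solvability is exactly what collapses the homotopy type onto the chains of complements.
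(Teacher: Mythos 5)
Your proof is correct and takes exactly the route the paper itself indicates for its new proof of Theorem \ref{thm:Thevenaz}: the modular quasi-$EL$-labeling of $L(G)$ with respect to the chief series, combined with the Morse matching of Theorem \ref{thm:qCLhomotopyFromDescs} and the identification of weakly descending chains with refinements of chains of complements in Lemma \ref{lem:DescChainsModularLabeling}. The paper records this only as a one-sentence observation (deferring to Th\'evenaz for the original argument), and the details you supply---in particular the group-theoretic step that solvability (abelian chief factors) forces each chain of complements to be a maximal chain with strictly decreasing labels, so that every critical cell has dimension exactly $r-2$ and the Morse complex is forced to be a wedge of $(r-2)$-spheres---are sound.
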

We recall that the geometry of $\vert L(G)\vert$ can in fact be used
to classify solvable groups:
\begin{thm}
\label{thm:SolvableIffShellable}For a finite group $G$, TFAE:
\begin{enumerate}
\item $G$ is solvable.
\item $L(G)$ is shellable. \cite{Shareshian:2001}
\item $L(G)$ has an $EL$-labeling. \cite{Woodroofe:2008}
\end{enumerate}
\end{thm}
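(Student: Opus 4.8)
The plan is to prove the equivalence by closing the cycle $(1)\Rightarrow(3)\Rightarrow(2)\Rightarrow(1)$. Routing things this way is deliberate: it lets me avoid the direct implication $(2)\Rightarrow(3)$, which looks genuinely hard, since mere shellability gives no obvious recipe for an explicit labeling. Two of the three links are precisely the theorems already cited in the statement, and the third is the standard shellability criterion recalled in Section~\ref{sec:Background}.

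I would dispose of $(3)\Rightarrow(2)$ immediately: a bounded poset admitting an $EL$-labeling is shellable \cite{Bjorner:1980}, so this link needs no group theory whatsoever.

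For $(1)\Rightarrow(3)$ I would appeal to \cite{Woodroofe:2008}. The setup fits the machinery of this paper: a solvable $G$ has a chief series $\hat{0}=N_{0}<N_{1}<\dots<N_{r}=G$, and since each $N_{i}$ is normal, the Dedekind identity makes it modular in $L(G)$, so $\mathbf{m}=\{N_{i}\}$ is a modular chain whose modular labeling is a quasi-$EL$-labeling. The real work, carried out in \cite{Woodroofe:2008}, is to refine this quasi-$EL$-labeling to an honest $EL$-labeling of all of $L(G)$. One cannot simply pair it with a pre-existing $EL$-labeling (that would be circular); instead the refinement is built by induction, exploiting that subgroups and quotients of solvable groups are again solvable. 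I would cite this construction rather than reproduce it.

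The main obstacle is $(2)\Rightarrow(1)$, which is Shareshian's theorem \cite{Shareshian:2001}. I would attack it in contrapositive form: a non-solvable group ought to have a non-shellable subgroup lattice. The natural strategy is to reduce to a minimal non-solvable group, all of whose proper subgroups are solvable, and then to exhibit inside $L(G)$ a low-rank interval whose order complex fails the connectivity forced by (sequential) Cohen-Macaulayness, for example a disconnected rank-two interval or a link with reduced homology in too low a degree. This is where the serious group-theoretic input enters, drawing on the structure theory of minimal simple groups, and it is precisely why I would invoke \cite{Shareshian:2001} wholesale rather than develop the analysis here. With the three links established, the cycle closes and all three conditions are equivalent.
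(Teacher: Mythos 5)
Your proposal is correct and follows essentially the same route as the paper's own proof sketch: $(3)\Rightarrow(2)$ is immediate from the shellability of $EL$-labeled posets, $(1)\Rightarrow(3)$ is the refinement of the modular labeling of a chief series into an $EL$-labeling cited from \cite{Woodroofe:2008}, and $(2)\Rightarrow(1)$ is Shareshian's theorem, invoked wholesale with its reliance on the classification of minimal simple groups \cite{Shareshian:2001}. The paper likewise treats the two hard implications as citations rather than reproving them, so there is nothing substantive to distinguish the two arguments.
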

The proof of Theorem \ref{thm:SolvableIffShellable} proceeds roughly
as follows. The direction $(3)\implies(2)$ is immediate. To show
$(1)\implies(3)$, refine the modular labeling for a solvable group
into an $EL$-labeling \cite[Theorem 4.1]{Woodroofe:2008}. For $(2)\implies(1)$,
Shareshian applies the classification of minimal simple groups, and
calculates enough information about the homotopy type for such a group
$G$ to show $L(G)$ is not shellable \cite[Section 3]{Shareshian:2001}.
\medskip{}

One feature of the $EL$-labeling from \cite{Woodroofe:2008} is that
the descending chains are exactly the chains of complements to the
chief series, giving a new proof of Theorem \ref{thm:Thevenaz}. We
observe that Theorem \ref{thm:Thevenaz} also follows from the modular
quasi-$EL$-labeling and Theorem \ref{thm:qCLhomotopyFromDescs},
and for essentially the same reasons.

While a new framework for understanding Theorem \ref{thm:Thevenaz}
has some appeal, the topology of $\vert L(G)\vert$ for a solvable
group is already well-understood. The real advantage of studying the
quasi-$EL$-labeling on $L(G)$ is that it is applicable to non-solvable
groups. We use this to give the new characterization of solvability
stated in Theorem \ref{thm:Intro-SolvableShellSkel} and Corollary
\ref{cor:Intro-SolvableDepth}:
\begin{proof}
[Proof of Theorem \ref{thm:Intro-SolvableShellSkel}/Corollary \ref{cor:Intro-SolvableDepth}]If
$G$ is a solvable group, then $L(G)$ is shellable by Theorem \ref{thm:SolvableIffShellable}.
Kohler \cite{Kohler:1968} proves the minimum length of a maximal
chain in the subgroup lattice of a solvable group to be $r$, hence
the minimum facet dimension and depth of $\vert L(G)\vert$ are $r-2$. 

Conversely, if $G$ is not solvable then all maximal chains have length
at least $r+2$ \cite[Theorem 1.4]{Shareshian/Woodroofe:2012}. By
Lemma \ref{lem:ModgenDistributive} each maximal chain has $r$ distinct
labels with respect to the modular labeling, and a pigeonhole argument
shows that in any maximal chain some label is repeated. Theorem \ref{thm:QuasiELShellsSkeleton}
then gives that $\depth\vert L(G)\vert\geq r-1$.
\end{proof}
These results at first glance appear somewhat surprising. One usually
considers shellability to be a tool to show that a simplicial complex
has strong properties related to Cohen-Macaulay, but in this situation
it is the non-shellable complexes which have higher depth (relative
to $r$). Theorem \ref{thm:Intro-SolvableShellSkel} and Corollary
\ref{cor:Intro-SolvableDepth} are essentially a consequence of non-solvable
groups having longer maximal chains than might be expected solely
from their modular structure.

If one restricts oneself to groups where $L(G)$ is not contractible,
then a similar characterization holds for connectivity by Corollary
\ref{cor:AlternatingConnectivity} and an argument parallel to that
of Theorem \ref{thm:Intro-SolvableShellSkel}:
\begin{cor}
Let $G$ be a finite group with a chief series of length $r$. If
$L(G)$ is $(r-2)$-connected, then either $L(G)$ is contractible
or else $G$ is not solvable.
\end{cor}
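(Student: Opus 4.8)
The plan is to reduce to the solvable case and then pin down the exact homotopy type of $\vert L(G)\vert$. Because the conclusion is a disjunction, there is nothing to prove when $G$ is non-solvable: the second alternative holds automatically. So I would assume that $G$ is solvable and that $\vert L(G)\vert$ is $(r-2)$-connected, aiming to conclude that $\vert L(G)\vert$ is contractible. The chief series of $G$ is a two-sided modular chain $\mathbf{m}$ of length $r$ in $L(G)$, so the machinery of the preceding sections applies to the modular quasi-$EL$-labeling $\lambda$ with respect to $\mathbf{m}$.

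The heart of the argument is to determine the homotopy type of $\vert L(G)\vert$ exactly, in parallel with the way the solvable case of Theorem \ref{thm:Intro-SolvableShellSkel} pinned down the depth exactly. I would do this through the Morse matching of Theorem \ref{thm:qCLhomotopyFromDescs}. By Lemma \ref{lem:DescChainsModularLabeling} the weakly descending chains of $\lambda$ are precisely the refinements of chains of complements to $\mathbf{m}$; by Lemma \ref{lem:ModgenDistributive} every such chain of complements has length exactly $r$ and is therefore already maximal; and by Theorem \ref{thm:ModRskel} every maximal chain carries exactly $r$ distinct labels. Hence each weakly descending $\mathbf{c}$ satisfies $\ell_0(\mathbf{c})=r$ and $\ell_1(\mathbf{c})=0$, so the critical cell it contributes has dimension exactly $r-2$. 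The critical cells are thus concentrated in the single dimension $r-2$, and $\vert L(G)\vert$ is homotopy equivalent to a wedge of $(r-2)$-spheres indexed by the chains of complements --- recovering Th\'evenaz's Theorem \ref{thm:Thevenaz}, which one could alternatively quote directly.

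It then remains to run the topological endgame. A wedge of $(r-2)$-spheres has $\tilde H_{r-2}\cong\mathbb Z^{n}$, where $n$ is the number of chains of complements, and it is $(r-3)$-connected but no more when $n\geq 1$. The hypothesis that $\vert L(G)\vert$ is $(r-2)$-connected forces $\tilde H_{r-2}=0$ by the Hurewicz theorem, so $n=0$; a wedge of no spheres is a point, and $\vert L(G)\vert$ is contractible, as desired. The main obstacle is the middle step: one must verify that in the two-sided modular, solvable setting \emph{all} critical cells live in the single top dimension $r-2$, so that vanishing of $\tilde H_{r-2}$ really does kill every sphere rather than leaving a surviving class in a different dimension. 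This is exactly what Lemmas \ref{lem:DescChainsModularLabeling} and \ref{lem:ModgenDistributive} together with Theorem \ref{thm:ModRskel} supply, and it is precisely where two-sided (rather than merely left-) modularity enters, via the equal-length property of chains of complements. I would also note for context that Corollary \ref{cor:AlternatingConnectivity} supplies the complementary half of the picture --- that non-solvable $L(G)$ is in fact $(r-2)$-connected --- which is what makes this corollary sharp.
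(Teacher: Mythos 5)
Your overall strategy is the paper's own: the non-solvable alternative is vacuous, so one assumes $G$ solvable, pins down the homotopy type of $\vert L(G)\vert$ as a wedge of $(r-2)$-spheres indexed by chains of complements, and observes that $(r-2)$-connectivity kills $\tilde{H}_{r-2}$ and hence every sphere, forcing contractibility. The paper justifies the corollary by exactly this ``argument parallel to that of Theorem \ref{thm:Intro-SolvableShellSkel},'' with the solvable half supplied by Th\'evenaz's Theorem \ref{thm:Thevenaz} (which the paper notes also follows from its own Morse-theoretic machinery). If you take the fallback you mention --- quoting Theorem \ref{thm:Thevenaz} directly --- your proof is complete and correct.

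However, your preferred route, re-deriving Th\'evenaz's theorem from the Morse matching, has a genuine gap at the step ``by Lemma \ref{lem:ModgenDistributive} every such chain of complements has length exactly $r$ and is therefore already maximal.'' Two-sided modularity does give the length statement, but length $r$ does not imply maximality of a chain: in a non-graded lattice a chain of length $r$ from $\hat{0}$ to $\hat{1}$ can have proper refinements, and it is precisely those refinements that would contribute $\ell_1>0$ and critical cells outside dimension $r-2$. The inference cannot be a consequence of modularity alone, since it fails for the chief series of any non-solvable group: for $G=A_5$ the chief series $1<G$ has length $r=1$, the unique chain of complements is $\{1<G\}$, which has length $r$ yet refines to $1<C_2<V_4<A_4<A_5$; were it a maximal chain, $A_5$ would have no proper nontrivial subgroups. (Indeed, if your inference held in general, every lattice with a modular chain of length $r$ would be a wedge of $(r-2)$-spheres, and the paper's solvable/non-solvable dichotomy would collapse.) What actually makes chains of complements into maximal chains in the solvable case is group theory, not lattice theory: if $H_\ell<K<H_{\ell-1}$ with $H_\ell$ a complement to the abelian minimal normal subgroup $N=H_{\ell-1}\cap M_\ell$ of $H_{\ell-1}$, then $K=H_\ell(K\cap N)$ by the Dedekind identity, and $K\cap N$ is normalized by $H_\ell$ and by the abelian $N$, hence is a normal subgroup of $H_{\ell-1}$ strictly between $1$ and $N$, contradicting minimality. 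This group-theoretic fact (essentially Th\'evenaz's input, and where solvability enters in an essential way) is what pins $\ell_0=r$, $\ell_1=0$ on weakly descending chains; supply it, or simply cite Theorem \ref{thm:Thevenaz}, and the rest of your argument goes through.
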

\bibliographystyle{hamsplain}
\bibliography{3_Users_russw_Documents_Research_Master}

\end{document}